\newcolumntype{P}[1]{>{\centering\arraybackslash}p{#1}}
\newcolumntype{M}[1]{>{\centering\arraybackslash}m{#1}}
\newtheorem{theorem}{Theorem}[section]
\newtheorem{proposition}[theorem]{Proposition}
\newtheorem{corollary}[theorem]{Corollary}
\newtheorem{lemma}[theorem]{Lemma}
\theoremstyle{definition}
\newtheorem{definition}[theorem]{Definition}
\newtheorem{example}[theorem]{Example}
\newtheorem{remark}[theorem]{Remark}
\newtheorem{notation}[theorem]{Notation}
\newcommand{\PP}{\mathbb{P}}
\newcommand{\CC}{\mathbb{C}}
\newcommand{\FF}{\mathbb{F}}
\newcommand{\KK}{\mathbb{K}}
\newcommand{\RR}{\mathbb{R}}
\newcommand{\HH}{\mathbb{H}}
\newcommand{\cO}{\mathcal{O} }
\newcommand{\cA}{\mathcal{A} }
\newcommand{\cC}{\mathcal{C} }
\newcommand{\cD}{\mathcal{D} }
\newcommand{\cE}{\mathcal{E} }
\newcommand{\cF}{\mathcal{F} }
\newcommand{\cG}{\mathcal{G} }
\newcommand{\cH}{\mathcal{H} }
\newcommand{\cI}{\mathcal{I} }
\newcommand{\cM}{\mathcal{M} }
\newcommand{\cN}{\mathcal{N} }
\newcommand{\cQ}{\mathcal{Q} }
\newcommand{\cS}{\mathcal{S} }
\newcommand{\cT}{\mathcal{T} }
\newcommand{\cU}{\mathcal{U} }
\newcommand{\cV}{\mathcal{V} }
\newcommand{\cW}{\mathcal{W} }
\newcommand{\cZ}{\mathcal{Z} }
\renewcommand{\bar}[1]{\overline{#1}}
\newcommand{\rA}{\mathrm{A} }
\newcommand{\rH}{\mathrm{H} }
\newcommand{\rM}{\mathrm{M} }
\newcommand\bD{\mathbf{D}}
\newcommand\bF{\mathbf{F}}
\newcommand\bH{\mathbf{H}}
\newcommand\bK{\mathbf{K}}
\newcommand\bM{\mathbf{M}}
\newcommand\bR{\mathbf{R}}
\newcommand\bQ{\mathbf{Q}}
\newcommand{\proj}{\mathrm{Proj}\;}
\newcommand{\im}{\mathrm{im}}
\def\Sym{\mathrm{Sym} }
\def\Hom{\mathrm{Hom} }
\def\Ext{\mathrm{Ext} }
\def\Gr{\mathrm{Gr} }
\def\SL{\mathrm{SL}}
\def\git{/\!/ }
\def\lr{\rightarrow}
\newcommand{\rank}{\mathrm{rank}\, }
\newcommand{\ses}[3]{0\lr{#1}\lr{#2}\lr{#3}\lr 0}
\newcommand\rk{\mathrm{rk}}
\begin{document}

\title[A smooth compactification of rational curves]{An $\text{SL}(3,\mathbb{C})$-equivariant smooth compactification of rational quartic plane curves}
\author{Kiryong Chung}
\address{Department of Mathematics Education, Kyungpook National University, 80 Daehakro, Bukgu, Daegu 41566, Korea}
\email{krchung@knu.ac.kr}

\author{Jeong-Seop Kim}
\address{School of Mathematics, Korea Institute for Advanced Study, 85 Hoegiro, Dongdaemun-gu, Seoul 02455, Korea}
\email{jeongseop@kias.re.kr}

\keywords{Rational quartic curves, Projective bundle, Equivariant Kodaira-Spencer map, Elementary modification}
\subjclass[2020]{14E05, 14D20, 14N10}

\begin{abstract}
Let $\mathbf{R}_d$ be the space of stable sheaves $F$ which satisfy the Hilbert polynomial $\chi(F(m))=dm+1$ and are supported on rational curves in the projective plane $\mathbb{P}^2$. Then $\mathbf{R}_1$ (resp. $\mathbf{R}_2$) is isomorphic to $\mathbf{R}_1\cong\mathbb{P}^2$ (resp. $\mathbf{R}_2\cong \mathbb{P}^5$). Also it is very well-known that $\mathbf{R}_3$ is isomorphic to a $\mathbb{P}^6$-bundle over $\mathbb{P}^2$. In special $\mathbf{R}_d$ is smooth for $d\leq 3$. But for $d\geq4$ case, one can imagine that the space $\mathbf{R}_d$ is no more smooth because of the complexity of boundary curves. In this paper, we obtain an $\mathrm{SL}(3,\mathbb{C})$-equivariant smooth resolution of $\mathbf{R}_4$ for $d=4$, which is a $\mathbb{P}^5$-bundle over the blow-up of a Kronecker modules space.
\end{abstract}

\maketitle

\section{Motivation and results}
We work on the field $\CC$ of complex numbers. 
\subsection{Rational curves and its moduli spaces}
Two ways are well-known to compactify rational curves space.
Let $X$ be a smooth projective variety with a fixed embedding $X\subset \PP^r$.
\begin{itemize}
\item (\cite{FP97}) Let $\cM_d(X)$ be the moduli space of \emph{stable} maps $[f:C\lr X]$ in $X$ with degree $\deg(f)=d$ and genus $g(C)=0$ .
\item (\cite{Sim94}) Let $\bM_{d}(X)$ be the moduli space of \emph{stable} sheaves $F$ in $X$ with Hilbert polynomial $\chi(F(m))=dm+1$.
\end{itemize}
The first (resp. second) one $\cM_d(X)$ (resp. $\bM_d(X)$) is often called by the Konstevich space (resp. Simpson space). The first moduli space $\cM_d(X)$ for a convex variety $X$ has at worst a finite group quotient singularity (i.e., orbifold singularity) (\cite[Theorem 2]{FP97}). Both spaces play a crucial role in counting of the number of curves, as they are known as Gromov-Witten (GW) and Donaldson-Thomas (DT) invariants (and also descendant invariants) respectively. Besides this aspect, it has played an important role in birational geometry (specially, log minimal model program or Fano/hyperk\"ahler geometry), and the first-named author studied the birational relationship between these spaces $\cM_d(X)$ and $\bM_d(X)$, $d\leq 3$ in a various setting (\cite{CK11, CHL18, Chu22}). To analyze the geometric structure of the Simpson space $\bM_d(X)$ in higher dimensional variety $X$ (and degree $d$), it is necessary to study the most basic case, namely, $X=\mathbb{P}^2$. For example, as shown in \cite{FT04, CCM16}, the moduli space $\bM_d(\PP^3)$, $d\leq 4$ consists of (several) irreducible components such that the intersection part is generically a fibration structure over the Grassimannian $\Gr(3,4)$ of planes in $\PP^3$.
In the viewpoint of enumerative geometry, the moduli space $\cM_d(\PP^2)$ provides a solution of a long standing problem called by \emph{Konstevich's formula} (\cite{FP97}). In detail, let $N_d$ be the number of
degree $d$ rational curves passing through $3d-1$ given points in general position in $\PP^2$. In 1995, through the work of Kontsevich (\cite{Kon95}), it has been found a recursive formula to compute $N_d$:
\[
N_d=\sum_{d_1+d_2=d}N_{d_1}N_{d_2}d_1^2d_2(d_2{3d-4 \choose 3d_1-2} -d_1{3d-4 \choose 3d_1-1}).
\]
Clearly $N_1=N_2=1$. Even the case of lower degrees has a very long history. In the mid-1800s, Steiner (resp. Zeuthen) showed that $N_3 = 12$ (resp. $N_4 = 620$) (\cite{Ste54, Zeu73}). Also $N_5$ was computed in middle of 1980.

On the other hand, prior to Kontsevich's work, research on the geometry of plane curves was focused on the study of Severi varieties $V_{d,\delta}$ (so called, the space of curves with prescribed degree $d$ and the number $\delta$ of nodes), and J. Harris made significant contributions and provided direction in this area. One of his notable achievements was proving the irreducibility of the Severi variety $V_{d,\delta}$ (\cite{Har86}). Related with this, in \cite{DH88b}, the authors mentioned the need for a smooth space with a moduli meaningful points. As one may expect, the boundary points of plane curves space may have bad singularity as possible and thus it is very hard to construct a smooth moduli space globally.

Although the management of orbifold singularities of the Kontsevich space has led to the development of a rich theory (so called, \emph{perfect obstruction} theory \cite{BF97}) in enumerative geometry, the fundamental question about smooth compactification of $V_{d,\delta}$ has been a central concern by following the monumental work of Hironaka (\cite{Hir64}). In this stream, we propose a smooth compactification of the space of degree $4$ rational plane curves, which is the first non-trivial case in $\PP^2$.

\subsection{Main results}
As a continued work of \cite{Chu21}, we recall the result and unsolved part in the view-point of \emph{resolution of singularities}.
Since a general plane rational quartic curve $C$ has three nodal points $Z$ by degree-genus formula, we consider space of pairs $(C,Z)$ in the relative Hilbert scheme of three points over the complete linear system of degree $4$. Let us denote $\bR_4^{h}$ by the closure of the locus of the rational quartic curves in this relative Hilbert scheme. Then there is a canonical projection map $\bR_4^{h}\lr \bH[3]$ into the Hilbert scheme of three points on $\PP^2$. On the hand, as identifying $\bR_4^{h}$ with the space of a stable pairs $(s, \cE xt^1(I_{Z|C}(1),\omega_{\PP^2}):=F)$, $s\in\rH^0(F)$ and performing the wall-crossing in the sense of variation of geometric invariant quotients (\cite{Tha96, He98}), we obtain a smooth contraction of the locus of rational quartic curves with three collinear points. In fact, let $\bR_4$ be the closure of the locus of rational quartic curves in the Simpson space $\bM_4(\PP^2)$, then we have a commutative diagram as follows.
\[
\xymatrix{\bR_4^{h}\ar[rr]^{\text{sm. blow-down}}\ar[d]&&\bR_4\ar[d]^{p_{s}}\\
\bH[3]\ar[rr]^{\pi}
&&\bK}
\]
Here $\bK$ is a \emph{Kronecker modules space}, which is a birational contraction of $\bH[3]$ along the locus $\widetilde{\bD}_3$ of collinear three points (For an explicit definition, see Section \ref{hilbpt}). Let $\bD_3$ be the image of $\widetilde{\bD}_3$ along the map $\pi:\bH[3]\lr\bK$. The main result of the paper \cite{Chu21} is to describe the fiber of the map $p_{s}:\bR_4\lr \bK$ over $\bD_3$. That is, each fiber of $p_s$ over $\bD_3$ parameterizes the stable sheaf which is completely determined by its support (For detail, see (2) of Proposition \ref{sum2}). It is remarkable that our proof is to use the elementary modification of the direct image sheaf of the universal family of $\cM_4(\PP^2)$. From this, we can conclude that $p_s$ is a $\PP^5$-fiberation over $\bK$ outside of the subspace $\bD_5$. Here, $\bD_5\subset \bK\setminus \bD_3=\bH[3]\setminus \widetilde{\bD}_3$ parametrizes what are called \emph{non-curvilinear} points in $\PP^2$ (Definition \ref{degdef}). Over $\bD_5$, the fiber of $p_s$ parameterizes stable sheaves whose support are rational quartic curves with a $D_4$-singularity (Figure \ref{fig1}) and thus $p_s|_{p_s^{-1}(\bD_5)}$ is a $\PP^8$-fiberation over $\bD_5$ ((1) of Proposition \ref{sum2}). The aim of this paper is to resolve this singular locus by the flattening of subschemes.

\begin{theorem}[\protect{Theorem \ref{mainthm}}]\label{mainthm0}  
Let $\bR_4$ be the closure of the space of rational quartic curves in the moduli space $\bM_4(\PP^2)$ of stable sheaves in $\PP^2$ with Hilbert polynomial $4m+1$. There exists an $\mathrm{SL}(3,\mathbb{C})$-equivariant smooth resolution $\widetilde{\bR}_4$ of $\bR_4$.
\end{theorem}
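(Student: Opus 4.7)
The plan is to resolve the non-flat locus of the projection $p_s:\bR_4\to\bK$ by a birational modification of the base, following the hint made explicit in the abstract: $\widetilde{\bR}_4$ should be a $\PP^5$-bundle over a blow-up of $\bK$. Recall from the discussion preceding the theorem that $p_s$ is a $\PP^5$-fibration outside the non-curvilinear locus $\bD_5\subset\bK$ and a $\PP^8$-fibration over $\bD_5$, so the singular structure of $\bR_4$ arises precisely from this fiber jump, and blowing up $\bK$ along $\bD_5$ is the natural candidate for a resolution of the base.

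First I would verify that $\bD_5\subset\bK$ is smooth and $\SL(3,\CC)$-invariant and that $\bK$ itself is smooth in a neighbourhood of $\bD_5$; this should follow either from the explicit description of Kronecker modules corresponding to non-curvilinear length-three subschemes, or from an identification of $\bD_5$ with a homogeneous space under $\SL(3,\CC)$. Granted this, the blow-up $\widetilde{\bK}=\Bl_{\bD_5}\bK$ is smooth and carries a canonical $\SL(3,\CC)$-action, which is inherited by every construction that follows.

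Next I would build $\widetilde{\bR}_4$ as a projective bundle $\PP(\widetilde{\cE})$ for a locally free sheaf $\widetilde{\cE}$ of rank $6$ on $\widetilde{\bK}$, obtained by equivariant elementary modification of the pull-back to $\widetilde{\bK}$ of the direct image of the universal sheaf from $\bR_4$. Concretely, over the exceptional divisor the modification should cut the fiber dimension from $\PP^8$ to $\PP^5$ using the tautological sub-data provided by the projectivised normal bundle of $\bD_5$ in $\bK$, together with the description of the $D_4$-singular stable sheaves recalled in Proposition \ref{sum2}(1). This is essentially the same construction as in \cite{Chu21}, where elementary modification of the direct image of the Kontsevich universal family produced the smooth contraction $\bR_4^{h}\to\bR_4$; the novelty here is that the modification is performed upstairs on the blown-up base rather than on the Kontsevich side.

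The main obstacle will be verifying that the induced classifying morphism $\PP(\widetilde{\cE})\to\bR_4$ is an isomorphism away from the exceptional divisor and a genuine resolution along it. For this I would compute the equivariant Kodaira-Spencer map at a generic point of the exceptional divisor: if this map is an isomorphism onto the tangent space of $\bR_4$ at the corresponding stable sheaf, then the morphism is étale on a dense open subset of the exceptional divisor and, combined with properness and $\SL(3,\CC)$-equivariance of both sides, forces it to be a projective birational morphism onto $\bR_4$; hence a resolution. Smoothness of $\widetilde{\bR}_4$ is then automatic from the smoothness of $\widetilde{\bK}$ and the local freeness of $\widetilde{\cE}$, while $\SL(3,\CC)$-equivariance is preserved at every step since the blow-up centre, the modification data, and the universal sheaf are all canonically equivariant.
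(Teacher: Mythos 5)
Your skeleton matches the paper's (blow up $\bK$ along $\bD_5$, exhibit $\widetilde{\bR}_4$ as a $\PP^5$-bundle over $\widetilde{\bK}$ obtained by an equivariant elementary modification, and use an equivariant Kodaira--Spencer computation), but the proposal is missing the one concrete idea that makes the construction run, and as written the modification step would not get off the ground. The paper does \emph{not} modify ``the direct image of the universal sheaf from $\bR_4$'': that sheaf is not locally free exactly where you need to work (the fibers of $\bR_4\to\bK$ jump from $\PP^5$ to $\PP^8$ over $\bD_5$), and building the resolution out of $\bR_4$'s own universal family is close to circular, since you would need to already control the local structure of $\bR_4$ along the bad locus to specify the modification data. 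Instead the paper starts from a bundle defined purely on the smooth base $\bK$, namely $\cV=\Sym^2\cF$ for the universal rank-$3$ bundle $\cF$ (motivated by the canonical form of a three-nodal quartic as an element of $\Sym^2\rH^0(I_Z(2))$), whose natural map to $\cU$ has image $\rH^0(I_Z^2(4))$. The degeneration over $\bD_5$ is then entirely encoded in the \emph{non-flat family of subschemes} cut out by the square ideals ($I_Z^2$ away from $\bD_5$ versus $I_q^4$ on it, with a jump in colength); the resolution is obtained by flattening this family after the blow-up and setting $\cV'=p_*\cI_{\overline{\cS}}(4)$, which is then shown to be a subbundle of $\pi^*\cU$ with locally free quotient. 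Your proposal never identifies this bundle nor verifies that its generic $\PP^5$-fiber actually recovers the rational quartics singular at $Z$ (Proposition \ref{mainprop1}), which is the step tying the construction to $\bR_4$ in the first place.

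Two further points in your verification step would fail. First, the equivariant Kodaira--Spencer map is used in the paper not to prove the classifying map is \'etale, but to identify the flat limit of the family of subschemes over the exceptional divisor: the point is that both $N_{\bD_5|\bK,q^2}\cong\Sym^3W_2$ and $\Ext^1_{\PP^2}(\CC_q[1],\cO_{q^4})$ are irreducible $\SL_2$-representations, so the equivariant map $\overline{\text{KS}}$ is either zero or an isomorphism and one only needs to test a single curvilinear degeneration. Second, the morphism $\PP(\cV')\to\bR_4$ is \emph{not} \'etale near the exceptional divisor: it contracts the $\PP^5$-bundle over $\widetilde{\bD}_5$ onto the $\PP^8$-fibration over $\bD_5$ (fiberwise this is the blow-down $\Bl_{\PP^4}\PP^8\to\PP^8$), so the argument ``\'etale on a dense open subset plus properness implies birational onto $\bR_4$'' does not apply as stated; birationality comes instead from the isomorphism away from the exceptional locus, and smoothness of $\widetilde{\bR}_4=\PP(\cV')$ comes, as you say, from local freeness of $\cV'$ over the smooth $\widetilde{\bK}$.
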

In fact, $\widetilde{\bR}_4$ is a projective bundle over the blown-up space $\widetilde{\bK}$ of $\bK$ along $\bD_5$.
 As a corollary, we set up the recipe for the computation of Chow ring of the space $\widetilde{\bR}_4$ (Proposition \ref{chowset}) and confirm the flip picture (Corollary \ref{marcor}) suggested by Maruyama (\cite{Mar73}).

The main idea of the proof of Theorem \ref{mainthm0} is as follows. Motivated from the canonical form of the planar rational quartic curve, we consider a rank $3$ vector bundle $\cF$ on $\bK$ whose fiber is generated by degree $2$ parts of the defining equation of three points on $\PP^2$. By taking the projectivization of the second symmetric power $\Sym^2\cF=:\cV$, we have a birational map \[\Phi:\PP(
\cV)\dashrightarrow \PP(\cU)\] (induced by the inclusion map $\cV\subset \cU$) over $\bK$ where the generic fiber of the bundle $\cU$ is generated by the quartic forms passing through three points in $\PP^2$. We show that  the non-torsion free locus of the quotient sheaf $\cU/\cV$ is exactly the locus $\bD_5(\cong\PP^2)$ of the non-curvilinear points. Now by considering the family of square ideals of three points in $\PP^2$, we see that $\bD_5$ parameterizes the locus of non-flat family of subschemes in $\PP^2$. After the blow-up of $\bK$ along $\bD_5$ and flattening of subschemes, we extend the map $\Phi$ as a birational morphism. All of the aforementioned statements are in the $\mathrm{SL}(3,\mathbb{C})=:\SL_3$-equivariant setting. Additionally, it is remarkable that the Kodaira-Spencer map in the proof Proposition \ref{mainprop2} is an equivariant one and thus we easily detect the flat limits parametrized by the blown-up space $\widetilde{\bK}$.
\subsection{Organization of the paper} In Section \ref{setting}, we collect the well-known facts about the Hilbert scheme of points, Bridgeland's stability, and the symmetric power of coherent sheaves. In Section \ref{babycase}, as a warm-up, we study the compactification of the space of rational cubic curves in $\PP^2$. Lastly, in Section \ref{d4case}, we provide a proof of Theorem \ref{mainthm0}.

\subsection*{Notations and conventions}
\begin{itemize}
\item Let $W_{n+1}$ be an $(n+1)$-dimensional complex vector space. We shortly write $\PP^{n}= \PP W_{n+1}$.
\item  The \emph{projective bundle} of a locally free sheaf $\cF$ over $X$ is defined by 
\[
\PP(\cF):=\proj(\mathrm{Sym}^{\bullet} (\cF^*))\lr X, 
\]
Hence $\PP(\cF)$ is the space of one-dimensional subspaces of $\cF$. 
\item $\bM_4 :=\bM_{d}(\PP^2)$: The moduli space of stable sheaves $F$ with Hilbert polynomial $\chi(F(m))=4m+1$.
\item $\bK := \bK(3;2,3)$: The moduli space of quiver representations of $3$-Kronecker quiver with the dimension vector $(2, 3)$ (Notation \ref{krodef}).
\item $\bQ_4:=\PP(\cU)$, where $\cU$ is a rank $12$ vector bundle over $\bK$ (Definition \ref{defqb}).
\item $\bH[n]:=\mathrm{Hilb}^n(\PP^2)$: The Hilbert scheme of zero-dimensional subschemes of length $n$ on $\PP^2$.
\item $\Rightarrow$ (resp. $\xhookrightarrow{}{}$, $\twoheadrightarrow$) denotes a bundle (resp. injective, surjective) morphism of varieties or locally free sheaves.
\item We often omit the pullback symbol in the pullback of locally free sheaves (or vector bundles) along a morphism.
\end{itemize}
\subsection*{Acknowledgements}
The topic in this paper was originally proposed by D. Chen when K. Chung was graduating (\cite[Chapter 6]{Chu11}), which provided an opportunity to explore various compactifications of the space of rational curves. K. Chung expresses deep gratitude to D. Chen. After writing the paper, the authors noticed some overlap between Section \ref{modsec} and \cite[Section 6]{LFV21}. We apologize for any redundancy with \cite{LFV21}, although our results are based on the same perspective as those in \cite{Chu21}.
\section{Preliminary}\label{setting}
In this section, we collect very well-known facts about the Hilbert scheme of points, the flip of moduli of stable objects and symmetric power of coherent sheaves.
\subsection{The Kronecker and Hilbert scheme}\label{hilbpt}
The moduli space of Kronecker modules is given by a  geometric invariant theoretic (GIT) quotient as follows. Let $\bK(3;2,3)$ be the moduli space of quiver representations of $3$-Kronecker quiver
\[
\xymatrix{\bullet\ar@/^/[r] \ar@/_/[r]\ar[r]&\bullet\\}
\]
with dimension vector $(2, 3)$. It can be identified with the space of isomorphism classes of stable sheaf homomorphisms
\[
\cO_{\PP^2}(-2)^{\oplus 2}\longrightarrow \cO_{\PP^2}(-1)^{\oplus 3}
\]
up to the action of the automorphism group $G := \mathrm{GL}_{d-2}\times \mathrm{GL}_{d-1}/\CC^*$. Thus for two vector spaces $E$ and $F$ of dimension $2$ and $3$ respectively and $W_3^* = \rH^0(\cO_{\PP^2}(1))$, the moduli space of Kronecker modules is given by the GIT quotient (\cite{Kin94})
\[\bK(3;2,3) := \Hom(F,W_3^{*}\otimes E)\git G.\]
\begin{notation}\label{krodef}
Let $\bK := \bK(3;2,3)$.
\end{notation}
 The space $\bK$ carries two bundles $\cE$ and $\cF$ with $\rank \cE=2$ and $\rank \cF=3$ such that $\wedge^2\cE\cong \wedge^3\cF$. Furthermore, the bundle $\cF$ provides an embedding map $i:\bK\subset \Gr(3, \Sym^2W_3^*)$
via the inclusion map 
\begin{equation}\label{univinc}
\cF\subset \Sym^2W_3^*\otimes \cO_{\bK}.
\end{equation}
On the other hand, there exists a \emph{universal complex}
\begin{equation}\label{univseq}
\KK^{\bullet}:\cE \boxtimes \cO_{\PP^2}(-1)\stackrel{\phi}{\hookrightarrow} \cF\boxtimes \cO_{\PP^2}\stackrel{\psi}{\longrightarrow} \cO_{\bK}\boxtimes \cO_{\PP^2}(2)
\end{equation}
over $\bK\times \PP^2$ such that $\phi$ is injective. The first map $\phi$ in the complex $\KK^{\bullet}$ in \eqref{univseq} provides the universal resolution of the ideal of the Hilbert scheme $\bH[3]$ of three points on $\PP^2$ excepting the locus $\widetilde{\bD_3}$ of collinear three points.
Note that $\widetilde{\bD_3}$ is isomorphic to a $\PP^3$-bundle over the space $\Gr(1,W_3^*)$ of lines, where each fiber $\PP^3$ parameterizes triple points on a line. In fact, the image of the map
\[
\pi:\bH[3]\lr \Gr(3,\Sym^2W_3^*),\;\; [Z]\mapsto \rH^0(I_Z(2))
\]
is $\bK$ and the map $\pi$ is a smooth blow-down with the exceptional divisor $\widetilde{\bD}_3$ (\cite{LQZ03}). Also the restriction map of $\pi$ along $\widetilde{\bD}_3$ contracts the fiber $\PP^3$.
\begin{notation}
Let us denote the image of $\widetilde{\bD}_3$ by $\bD_3 (\cong \Gr(1,W_3^*)$) in $\bK$.
\end{notation}
For each closed point $[Z]\in \bK\setminus \bD_3$, the fibers of locally free sheaves $\cE$ and $\cF$ at $[Z]$ are given by
\[\begin{split}
\cF_{[Z]}&=\rH^0(\PP^2, I_Z(2))\subset \rH^0(\PP^2, \cO_{\PP^2}(2))=\Sym^2W_3^*,\\
\cE_{[Z]}&=\text{ker}\left\{\cF_{[Z]}\otimes\rH^0(\PP^2, \cO_{\PP^2}(1))\stackrel{m}{\longrightarrow}\rH^0(\PP^2, \cO_{\PP^2}(3))\right\},
\end{split}
\]
where $m$ is the restriction of the canonical multiplication map $m: \Sym^2W_3^*\otimes W_3^*\lr \Sym^3W_3^*$. But the complex $\KK^{\bullet}$ in \eqref{univseq} is not acyclic (i.e., $h^0\neq0$) over $\bD_3$. For each closed point $[l]\in \bD_3$, let $\text{coker}(\phi)_{[l]}:=E_{[l]}$ be the cokernel of $\phi_{[l]}$. Then $E_{[l]}$ fits into the short exact sequence:
\begin{equation}\label{eq011}
\ses{\cO_{l}(-1)}{E_{[l]}}{\cO_{\PP^2}(1)}.
\end{equation}
Furthermore, the map $\psi_{[l]}$ in the complex $\KK^{\bullet}$ decomposes into
\begin{center}
\begin{equation}\label{fact1}
\begin{tikzcd}
{\cF_{[l]}\otimes \cO_{\PP^2}} \arrow[rr, "{\psi_{[l]}}"] \arrow[rd, two heads] &                                           & \cO_{\PP^2}(2)                 \\
                                               & {E_{[l]}} \arrow[ru] \arrow[r, two heads] & \cO_{\PP^2}(1). \arrow[u, hook]
\end{tikzcd}
\end{equation}
\end{center}
\begin{remark}
The complex in \eqref{univseq} is a special case of the following one.
Let $\phi: \cE_0\lr \cE_1$ be a morphism of locally free sheaves $\cE_0$ and $\cE_1$ on a smooth projective variety $X$ with $\rank\cE_0+1=\rank \cE_1=n$. There exists an \emph{Eagon-Northcott complex} associated to the dual map $\phi^{t}: \cE_1^*\lr \cE_0^*$ (\cite{EN62}). By using  an isomorphism $\wedge^{n-1}\cE_1\cong \cE_1^*\otimes\wedge^{n}\cE_1$, one can have a complex
\[
\cE_0\stackrel{\phi}\longrightarrow \cE_1\stackrel{\wedge^{n-1}\phi^{t}}{\longrightarrow}(\wedge^{n}\cE_1)\otimes\wedge^{n-1}\cE_0^*.
\]
\end{remark}

\subsection{Moduli space of stable objects and its flip}\label{fliplocus}
The moduli space of semistable objects on $\PP^2$ has been extensively studied in the context of birational geometry (\cite{BMW14}). In this paper, we briefly discuss the chamber structure of Bridgeland's stability space and its wall-crossings.

We define the potential function for $s \in \RR$ and $t > 0$ as follows:
\[
	Z_{s,t}(E)=- \int_{\PP^2} e^{-(s+it)H} \cdot ch(E)
	= -\left(ch_{2}- sc_{1} + \frac{1}{2}r(s^{2}-t^{2})\right)
	+ it(c_{1}-rs),
\]
where $(r, c_{1}, ch_{2}) = (r(E), c_{1}(E),ch_{2}(E))$ and $H=c_1(\cO_{\PP^2}(1))$. We can determine the stability of objects in the heart $\cA_{s}$ of a certain $t$-structure (depending only on $s$) in $\mathrm{D}^b(\PP^2)$ using the slope function:
\[
\mu_{s,t} : \mathrm{K}(\PP^{2}) \to \RR,\;	\mu_{s,t}(E)=
	-\frac{\mathrm{Re}(Z_{s,t}(E))}{\mathrm{Im}(Z_{s,t}(E))}
	= \frac{ch_{2} - sc_{1}+\frac{1}{2}r(s^{2}-t^{2})}{t(c_{1}-rs)}.
\]
Also the stability space (identified with the upper half plane $\HH = \{(s,t)\in \RR^{2}\;|\; t> 0\}$) can be decomposed into a finite number of chambers. Let $\rM_{\mu_{s,t}}(v)$ be the moduli space of Bridgeland stable objects in $\cA_{s}$ with a fixed type $v = (r, c_{1}, ch_{2}) \in \rA^{*}(\PP^{2})$. The space $\rM_{\mu_{s,t}}(v)$ varies only if $(s,t)$ moves from one chamber to another.

In our case, let us fix $v(F)=(0, 4, -5)\in\rA^{*}(\PP^{2})$ (and thus $\chi(F(m)) = 4m+1$). 
\begin{definition}\label{defqb}
Let $\cU:=p_* (\text{coker}(\phi)\boxtimes \cO_{\PP^2}(2))$ in \eqref{univseq} be the locally free sheaf on $\bK$, where $p:\bQ_4:=\PP(\cU)\lr \bK$ is the canonical bundle morphism.
\end{definition}
\begin{remark}
The projectivized bundle $\PP(\cU)$, when restricted to the open subset $\bK\setminus\bD_3$, is isomorphic to the relative Hilbert scheme parameterizing three non-collinear points on plane quartic curves, as shown in \cite[Section 4.4]{He98}.
\end{remark}
\begin{proposition}[\protect{\cite[Lemma 6.9]{Woo13}}]
Under the above notations and definitions,
\begin{enumerate}
\item The walls dividing chambers in $\HH$ are semi-circles with the common center $(-\frac{5}{4},0)$.
\item There is only one numerical wall when the radius of the semicircle is $R = \frac{7}{4}$.
\item There are two different moduli spaces of stable objects:
\[\rM_{\mu_{s,t}}(v)\cong
\begin{cases}
\bQ_4,\;t \ll 1,\\
\bM_4,\; t\gg 1.
\end{cases}
\]
\end{enumerate}
\end{proposition}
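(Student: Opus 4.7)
The plan is to work directly with the wall equation $\mu_{s,t}(E) = \mu_{s,t}(F)$ for a potential Bridgeland destabilizer $E \in \cA_s$ of $F$, exploiting that the type $v(F) = (0, 4, -5)$ has vanishing rank. For $E$ of Chern character $(r', c', d')$ with $r' \neq 0$, cross-multiplying the slope identity yields
\[
2 r' s^{2} + 5 r' s + 2 r' t^{2} \;=\; 4 d' + 5 c',
\]
and completing the square in $s$ gives
\[
\left(s + \tfrac{5}{4}\right)^{2} + t^{2} \;=\; \tfrac{25}{16} + \tfrac{4 d' + 5 c'}{2 r'}.
\]
This establishes (1): every numerical wall is a semicircle centered at $(-5/4, 0)$. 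The residual case $r' = 0$ (with $c' \neq 0$) forces the numerical identity $4 d' + 5 c' = 0$, which either holds throughout $\HH$ or nowhere, so it contributes no proper wall; zero-dimensional $E$ are stable of infinite slope and cannot destabilize a pure $F$.

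For part (2), the task is to enumerate the integral triples $(r', c', d')$ for which an actual destabilizing subobject exists. Requiring that both $E$ and $F/E$ lie in the tilted heart $\cA_s$ restricts the slope $c'/r'$ to a bounded interval around $s$, and combining this with the Bogomolov inequality $c'^{2} \geq 2 r' d'$ bounds the right-hand side of the radius formula above. A finite case analysis (the main technical hurdle) eliminates all remaining candidates except $R = 7/4$, i.e.\ $4 d' + 5 c' = 3 r'$; the surviving wall is realized by the twisted universal complex $\KK^{\bullet} \otimes \cO_{\PP^{2}}(2)$ of \eqref{univseq}, which naturally produces a short exact triangle in $\cA_s$ with the correct invariants.

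For part (3), the identification of the two chambers combines the standard large-volume limit with a direct analysis on the small-$t$ side. When $t \gg 1$, Bridgeland stability with potential $Z_{s,t}$ for a pure one-dimensional object of type $(0, 4, -5)$ is equivalent to Gieseker semistability for sheaves with $\chi(F(m)) = 4m + 1$, so the moduli space is $\bM_4$ by definition. For $0 < t \ll 1$, the wall-crossing triangle isolated in (2) together with the universal presentation \eqref{univseq} forces each Bridgeland stable object of type $v$ to be determined by the data of a Kronecker module $[Z] \in \bK$ together with a section $[s] \in \PP(\text{coker}(\phi)_{[Z]} \otimes \cO_{\PP^{2}}(2))$, which is exactly the universal family parametrized by $\PP(\cU) = \bQ_4$ of Definition \ref{defqb}. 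The most delicate step throughout is the combinatorial enumeration in (2); once that is in hand, the identifications in (3) are essentially forced by the universal property of each moduli functor on its respective side of the wall.
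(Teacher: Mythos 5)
First, note that the paper itself gives no proof of this proposition: it is quoted verbatim from \cite[Lemma 6.9]{Woo13}, so there is no internal argument to compare yours against. Within your proposal, part (1) is correct and complete: cross-multiplying $\mu_{s,t}(E)=\mu_{s,t}(F)$ for $v=(0,4,-5)$ does yield $\left(s+\tfrac{5}{4}\right)^2+t^2=\tfrac{25}{16}+\tfrac{4d'+5c'}{2r'}$, and your treatment of the rank-zero case is fine. The genuine gap is in (2): the assertion that among all these concentric semicircles exactly one, $R=\tfrac{7}{4}$, is an actual wall \emph{is} the content of the lemma, and you dispose of it with ``a finite case analysis \dots eliminates all remaining candidates.'' That analysis is the proof; nothing is established without it. Moreover, the one concrete claim you make about the surviving wall is not right: $\KK^{\bullet}\otimes\cO_{\PP^2}(2)$ is a three-term complex on $\bK\times\PP^2$, i.e.\ a \emph{family} over the Kronecker space, not a subobject of a single object of class $v$ in the heart $\cA_s$, so it cannot ``realize'' a wall. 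The wall at $R=\tfrac{7}{4}$ is realized by the triangles underlying the flip loci $\bF^{\pm}$ of Section \ref{fliplocus}: for $F\in\bF^{+}$ one has $h^0(F)=2$, hence $h^1(F)=1$, hence $\Ext^1(F,\cO_{\PP^2}(-3))\neq 0$, giving a destabilizing quotient $F\twoheadrightarrow\cO_{\PP^2}(-3)[1]$ whose class $(-1,3,-\tfrac{9}{2})$ satisfies $4d'+5c'=3r'$ and so lands exactly on $R=\tfrac{7}{4}$.

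Your enumeration would also have to rule out the numerically smaller candidates, most pointedly $\cO_{\PP^2}$ itself, whose class gives $4d'+5c'=0$ and hence $R=\tfrac{5}{4}$, and which admits a nonzero map to \emph{every} stable sheaf $F$ of class $v$ (since $\chi(F)=1$ and $h^2(F)=0$ force $h^0(F)\geq 1$). Explaining why this does not produce a second wall requires knowing that the objects parametrized inside the $R=\tfrac{7}{4}$ semicircle are the derived-dual complexes of \eqref{univ1}, which no longer receive maps from $\cO_{\PP^2}$ in the heart; none of this appears in your sketch. Part (3) has the same character: the large-volume identification with Gieseker stability is standard, but the identification of the inner chamber with $\bQ_4=\PP(\cU)$ is asserted rather than derived. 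As written, the proposal proves (1) and only outlines (2) and (3); the fix is either to carry out the actual enumeration (bounding the destabilizing classes via membership in $\cA_s$ and Bogomolov, then exhibiting or excluding actual sub/quotient objects for each) or to do what the paper does and simply cite \cite[Lemma 6.9]{Woo13}.
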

The universal subbundle homomorphism $\cO_{\bQ_4}(-1)\hookrightarrow p^*\cU$ determines a universal family of stable complexes
\begin{equation}\label{univ1}
\FF_4:=[\cO_{\bQ_4\times \PP^2}\rightarrow \pi_1^*\cO_{\bQ_4}(1)\otimes (p\times\text{id})^*(\text{coker}(\phi)\boxtimes \cO_{\PP^2}(2))].
\end{equation}
As modifying of the complex $\FF$ in \eqref{univ1}, there exists a flip diagram between $\bQ_4$ and $\bM_4$ such that the flipping loci are given by
\begin{center}
\begin{tikzcd}
\bQ_4 \arrow[rr, dashed]               &      ^{\text{flip}}           & \bM_4 \arrow[ll, dashed]                    & {} \\
F^{-}=\text{Fl}(W_3) \arrow[rd,Rightarrow]  \arrow[u, hook] &                & F^{+}=\cC_4 \arrow[ld,Rightarrow] \arrow[ru, phantom] \arrow[u, hook] &    \\
                                            & \PP^2&                                                      &   
\end{tikzcd}
\end{center}
\begin{itemize}
\item $\bF^{-}$ is isomorphic to the \emph{universal lines} $\text{Fl}(W_3)=\{(q,l)\mid q\in l\in\Gr(1,W_3^*)\}$ and
\item $\bF^{+}$ is isomorphic to the \emph{universal quartic curves} $\cC_4=\{(q,C)\mid q\in C\in |\cO_{\PP^2}(4)|\}$.
\end{itemize}
We remark that the flipping locus $\bF^{+}$ parameterizes stable sheaves $F$ satisfying $h^0(F)=2$.
The complex $F:=[\cO_{\PP^2}\stackrel{s}{\lr} I_{Z|\PP^2}(4)]\in \bQ_4\setminus \bF^{-}$ bijectively associates its derived dual $F^{\vee}\in \bM_4\setminus \bF^{+}$.
\begin{proposition}\label{conh1}
Let $[l]\in \bD_3=\Gr(1,W_3^*)$ and
\[s=(s_1,s_2)\in \rH^0(E_{[l]}(2))=\rH^0(\cO_l(1))\oplus \rH^0(\cO_{\PP^2}(3))\]
such that $E_{[l]}$ in \eqref{eq011}. Then,
\begin{enumerate}
\item $\bF^{-}$ is contained in $p^{-1} (\bD_3)$.
\item $s_2=0$ if and only if $[\FF_4|_{([l],s)\times\PP^2}]\in \bF^{-}$, where the stable complex fits into the exact triangle
\[
0\lr [\cO_{\PP^2}\stackrel{s_1}{\longrightarrow} \cO_l(1)]\lr\FF_4|_{([l],s)\times\PP^2}\lr  \cO_{\PP^2}(3)[1]\lr0.
\]
\item If $s_2\neq 0$, then the derived dual $\FF_4|_{([l],s)\times\PP^2}^{\vee}$ is quasi-isomorphic to a stable sheave $F$ fitting into the exact sequence
\begin{equation}\label{eq01}
\ses{\cO_C}{F}{\cO_{l}}
\end{equation}
for a cubic curve $C=\text{Supp}(\text{coker}(s_2))$. Thus $h^0(F)=1$.
\end{enumerate}
\end{proposition}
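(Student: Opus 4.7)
The strategy in all three parts is to analyze the two-term complex
\[
\FF_4|_{([l],s)\times\PP^2} = [\cO_{\PP^2}\xrightarrow{s} E_{[l]}(2)]
\]
via the short exact sequence
\[
0\to\cO_l(1)\to E_{[l]}(2)\to\cO_{\PP^2}(3)\to 0
\]
obtained by twisting \eqref{eq011} by $\cO_{\PP^2}(2)$. The section $s=(s_1,s_2)$ records how this complex interacts with the subsheaf $\cO_l(1)$ and the quotient $\cO_{\PP^2}(3)$, and each vanishing pattern produces one of the three conclusions.

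For (1), I observe that over $\bK\setminus\bD_3$ the universal complex $\KK^\bullet$ in \eqref{univseq} is acyclic, so $\text{coker}(\phi)|_{\bK\setminus\bD_3}\cong I_Z(2)$ is torsion-free of rank one. Consequently its twist $I_Z(4)$ admits no torsion subsheaf that could destabilize $[\cO\to I_Z(4)]$ when crossing the wall; such complexes are carried through the flip isomorphically to their derived duals, so no element of $\bF^-$ can lie outside $p^{-1}(\bD_3)$.

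For (2), the vanishing $s_2=0$ is equivalent to $s$ factoring through the inclusion $\cO_l(1)\hookrightarrow E_{[l]}(2)$, inducing a morphism of two-term complexes
\[
[\cO_{\PP^2}\xrightarrow{s_1}\cO_l(1)]\longrightarrow \FF_4|_{([l],s)\times\PP^2}.
\]
A mapping-cone computation using the short exact sequence above identifies the third vertex of the resulting exact triangle as $\cO_{\PP^2}(3)[1]$, yielding the displayed triangle. The sub-complex $[\cO_{\PP^2}\to\cO_l(1)]$ is the natural destabilizing object at the wall: since $s_1\in\rH^0(\cO_l(1))$ determines a point $q=V(s_1)\in l$, and pairs $(q,l)$ parameterize $\mathrm{Fl}(W_3)=\bF^-$, this identifies the class of $\FF_4|_{([l],s)\times\PP^2}$ with a point of $\bF^-$. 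The converse follows by noting that any element of $\bF^-$ carries a line-supported destabilizing sub, forcing $s_2=0$.

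For (3), when $s_2\neq 0$ I instead dualize through the quotient map: the induced morphism $\FF_4|_{([l],s)\times\PP^2}\to [\cO_{\PP^2}\xrightarrow{s_2}\cO_{\PP^2}(3)]$ has target quasi-isomorphic to $\cO_C(3)$ (with $C=V(s_2)$ a cubic) and fiber one-term complex $\cO_l(1)$, producing an exact triangle linking $\FF_4|_{([l],s)\times\PP^2}$ with $\cO_C(3)$ and $\cO_l(1)$. Applying the derived-dual functor that realizes the flip and invoking Serre-Grothendieck duality converts this triangle into the short exact sequence \eqref{eq01}, using $R\cHom(\cO_C(3),\omega_{\PP^2})[1]\cong \cO_C$ (thanks to $\omega_C\cong\cO_C$ for a plane cubic) and the analogous identification sending $\cO_l(1)$ to $\cO_l$. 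Finally, the long exact cohomology sequence for \eqref{eq01}, together with the boundary map $\rH^0(\cO_l)\to\rH^1(\cO_C)\cong\CC$ being an isomorphism (equivalent to non-triviality of the extension, reflecting how $s_1$ glues $C$ to $l$), yields $h^0(F)=1$.

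The principal obstacle is the bookkeeping of grading shifts and twists under the derived dual realizing the flip: one must check that the Serre duals of $\cO_C(3)$ and $\cO_l(1)$ match $\cO_C$ and $\cO_l$ exactly, with the intrinsic $\cO(4)$-twist of the flip absorbing the excess, so that the resulting triangle is precisely \eqref{eq01} rather than a Serre-shifted variant. A secondary subtlety is verifying that the extension class defining $F$ is non-zero, so that $h^0(F)=1$ rather than $2$; this should follow from the genericity forced by $s_2\neq 0$ and the explicit form of the gluing data along $C\cap l$.
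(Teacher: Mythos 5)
The paper does not prove this proposition directly: parts (1) and (2) are cited verbatim to \cite[Lemmas 3.7 and 3.9]{CM17}, and part (3) is obtained by ``reading diagram (9) of \cite{CM17} in reverse.'' Your reconstruction --- filtering the two-term complex $[\cO_{\PP^2}\xrightarrow{s}E_{[l]}(2)]$ by the sub/quotient structure $\ses{\cO_l(1)}{E_{[l]}(2)}{\cO_{\PP^2}(3)}$, reading off the two cases from the vanishing of $s_2$, and then dualizing --- is the right mechanism and is essentially the argument the citation encapsulates. Your bookkeeping of the dual also checks out: with the normalization $G\mapsto\cExt^1(G,\cO_{\PP^2})$ implicit in the identification of $\bM_4$ with duals of pairs, one indeed gets $\cO_C(3)\mapsto\cO_C$ and $\cO_l(1)\mapsto\cO_l$, since $\omega_C\cong\cO_C$ for a plane cubic and $\omega_l\cong\cO_l(-2)$.

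Two points need repair. First, in (3) your justification of $h^0(F)=1$ is based on a false equivalence: non-triviality of the extension class in $\Ext^1(\cO_l,\cO_C)$ does \emph{not} imply that the connecting map $\rH^0(\cO_l)\to\rH^1(\cO_C)$ is an isomorphism. Here $\Ext^1(\cO_l,\cO_C)\cong\rH^0(\cExt^1(\cO_l,\cO_C))$ is $3$-dimensional (supported on the length-$3$ scheme $C\cap l$), and the connecting map is the image of the extension class under a linear functional to $\rH^1(\cO_C)\cong\CC$, which has a $2$-dimensional kernel of non-split classes. The correct route is the one the paper sets up: $\bF^{+}$ is exactly the locus of stable sheaves with $h^0(F)=2$, and the point at hand lies in $\bM_4\setminus\bF^{+}\cong\bQ_4\setminus\bF^{-}$, so $h^0(F)=\chi(F)=1$. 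Second, both part (1) and the converse direction of part (2) silently invoke the classification of the destabilizing subobjects at the unique wall --- namely that an object of $\bQ_4$ lies in $\bF^{-}$ exactly when it admits a subcomplex of the form $[\cO_{\PP^2}\to\cO_l(1)]$; your phrase ``no torsion subsheaf that could destabilize'' asserts this rather than proves it. That classification is precisely the content of the wall-crossing analysis in \cite{Woo13} and \cite{CM17}, so you should either cite it or carry out the slope comparison at $R=\tfrac{7}{4}$ explicitly.
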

\begin{proof}
(1) and (2): \cite[Lemma 3.9 and Lemma 3.7]{CM17}.

(3) When the diagram (9) in \cite{CM17} is read in reverse, one can obtain the exact sequence \eqref{eq01}.
\end{proof}
Recall that $\bR_4$ is the closure of the space of rational quartic curves in $\bM_4$. From \cite[Proposition 3.3]{Chu21}, $h^0(F)=1$ for each point $[F]\in \bR_4$ and thus there is an embedding $\bR_4\subset\bM_4\setminus \bF^{-}$. 
\begin{proposition}[\protect{\cite[Proposition 3.6 and Proposition 4.1]{Chu21}}]\label{sum2}
Let $\pi$ be the composition of the maps
\[\pi: \bR_4\subset \bM_4\setminus\bF^{-}\cong \bQ_4\setminus \bF^{+}\lr \bK.\]
Then the map $\pi$ is a generic $\PP^5$-fibration over $\bK$ and the special fiber is described as follows.
\begin{enumerate}
\item $\pi$ is a $\PP^8$-fibration map over the locus $\bD_5$ of non-curvilinear points (For the explicit definition, see Definition \ref{degdef}) such that the fiber $\PP^8$ parameterizes the plane quartic curves singular at a point parameterized by $\bD_5$.
\item For any $[l]\in\bD_3$, the fiber $\pi^{-1}([l])\cong\PP^5$ parametrizes a stable sheaf $F$ that fits into a \emph{unique} exact sequence of the form $\ses{\cO_{C\cdot l}}{F}{\cO_l}$, where $C$ is a conic parameterized by the fiber $\PP^5$ (cf. Proposition \ref{conh1}).
\end{enumerate}
\end{proposition}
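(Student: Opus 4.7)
The strategy is to pull $\pi$ back across the Bridgeland flip so that it becomes the restriction to $\bR_4$ of the bundle morphism $p:\bQ_4=\PP(\cU)\to\bK$, and then analyze the fiber case by case. For $[Z]\in\bK\setminus\bD_3$ the $p$-fiber is $\PP(\cU_{[Z]})=\PP(\rH^0(I_Z(4)))$, whose points $s$ correspond to plane quartics $C_s\supseteq Z$, and the derived dual $F_s:=[\cO_{\PP^2}\to I_{Z|\PP^2}(4)]^{\vee}$ is a stable sheaf with $\chi(F_s(m))=4m+1$. Hence $\pi^{-1}([Z])$ is cut out in this fiber by the condition that $F_s$ lie in the closure $\bR_4$ of rational-quartic stable sheaves.

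For a generic $[Z]\in\bK\setminus(\bD_3\cup\bD_5)$, i.e.\ $Z$ three reduced non-collinear points, I would prove that $F_s\in\bR_4$ iff $Z\subset\mathrm{Sing}(C_s)$, i.e.\ $s\in\rH^0(I_Z^2(4))$. The $(\Leftarrow)$ direction is a normalization argument: for a quartic nodal at $Z$, the partial normalization $\widetilde{C}_s\to C_s$ at those nodes has arithmetic genus zero when $C_s$ is general, and $F_s$ coincides with the push-forward of $\cO_{\widetilde{C}_s}$, giving the structure sheaf of a rational quartic. The $(\Rightarrow)$ direction is a flatness/limit argument: otherwise $F_s$ lies in a different Simpson stratum and cannot specialize into $\bR_4$. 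A Hilbert-function count (three general points impose nine independent order-two conditions on $|\cO_{\PP^2}(4)|$) gives $\dim\rH^0(I_Z^2(4))=6$, hence $\pi^{-1}([Z])\cong\PP^5$.

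For $[Z]\in\bD_5$ one writes $I_Z=\mathfrak{m}_p^2$ for some $p\in\PP^2$, so the ambient $p$-fiber is $\PP(\rH^0(I_Z(4)))\cong\PP^{11}$. The claim $\pi^{-1}([Z])=\PP(\rH^0(\mathfrak{m}_p^3(4)))\cong\PP^8$ would be established by (i) a local analysis at $p$ showing that only $s\in\mathfrak{m}_p^3$, equivalently $C_s$ having a triple point at $p$, yields a sheaf admitting a flat deformation to the pushforward from a rational quartic whose three nodes collide to $p$; (ii) the count $\dim\rH^0(\mathfrak{m}_p^3(4))=15-6=9$; (iii) the observation that a generic triple-point quartic has a $D_4$-singularity, hence rational normalization. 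For $[l]\in\bD_3$, Proposition \ref{conh1}(3) already produces a sequence $0\to\cO_{C'}\to F\to\cO_l\to 0$ with $C'$ a cubic. Since $\cO_l$ is a pure quotient, the scheme-theoretic support of $F$ contains $l$, forcing $l\subseteq C'$, so $C'=l\cdot C$ for a conic $C$. The parameter $\PP(\rH^0(\cO_{\PP^2}(2)))\cong\PP^5$ of conics then maps bijectively onto $\pi^{-1}([l])$; uniqueness of the sequence is obtained by recognizing $\cO_l$ as the maximal pure one-dimensional quotient of $F$ supported on $l$, which determines the subsheaf $\cO_{C'\cdot l}$ and hence $C$.

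The main obstacle is part (1), the $\bD_5$-fiber: one must track how the derived-dual functor interacts with a non-curvilinear fat point and show that \emph{exactly} the triple-point quartics produce specializations of rational-quartic sheaves, with no spurious extra components. This flatness defect is precisely what is resolved by the blow-up of $\bK$ along $\bD_5$ in the main theorem of the paper.
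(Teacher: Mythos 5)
First, be aware that the paper contains no proof of Proposition \ref{sum2}: it is imported wholesale from \cite{Chu21}, where the argument runs through the stable--pair model $\bR_4^h$ (the relative Hilbert scheme of three points on quartics), a wall-crossing, and an elementary modification of the direct image of the universal family over the Kontsevich space $\cM_4(\PP^2)$. Your reconstruction instead tries to read the fibers off directly inside $\PP(\cU)$ after the Bridgeland flip. The setup and all the numerical counts ($15-9=6$, $15-6=9$, $h^0(\cO_{\PP^2}(2))=6$) are correct, and the lower bounds are in good shape: the normalization argument showing that a general quartic singular along $Z$ is $3$-nodal and yields $\nu_*\cO_{\PP^1}\in\bR_4$ is the right mechanism for the generic fiber.

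The genuine gaps are on the ``upper bound'' side, i.e.\ in showing that the fibers of the \emph{closure} are no larger than claimed, and these are precisely the content of the proposition. (a) For part (1) you concede that step (i) --- that \emph{only} sections $s\in\mathfrak{m}_p^3$ occur over $[I_p^2]\in\bD_5$ --- is ``the main obstacle,'' and nothing in the proposal supplies it. The ambient fiber of $\PP(\cU)$ over $[I_p^2]$ is $\PP(\rH^0(\mathfrak{m}_p^2(4)))\cong\PP^{11}$, and a limit of rational quartic sheaves over $[I_p^2]$ need not arise from a family in which three nodes visibly collide at $p$ (the supporting quartic may simultaneously degenerate, become non-reduced, etc.); excluding the excess $\PP^{11}\setminus\PP^{8}$ is exactly the non-flatness phenomenon the rest of the paper is built to repair and is not a routine local computation. (b) For part (2), the inference ``$\cO_l$ is a quotient of $F$, hence $\mathrm{Supp}(F)\supseteq l$, hence $l\subseteq C'$'' is a non sequitur: a non-split extension of $\cO_l$ by $\cO_{C'}$ with $C'$ a cubic \emph{not} containing $l$ is supported on $C'\cup l$ and can perfectly well be stable with Hilbert polynomial $4m+1$; Proposition \ref{conh1}(3) places no divisibility condition on the cubic $\mathrm{Supp}(\mathrm{coker}(s_2))$. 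That the cubic must have the form $C\cdot l$ for $F\in\bR_4$ is again a statement about which extensions lie in the closure; in the present paper it only becomes visible a posteriori from the computation in Proposition \ref{mainprop1} that the image of $\Sym^2\cF_{[l]}$ in $\Sym^4W_3^*$ is $x^2\cdot\Sym^2W_3^*$, and in \cite{Chu21} it is extracted from the elementary-modification argument. The uniqueness of the exact sequence likewise requires $\dim\Hom(F,\cO_l)=1$ (or an equivalent computation), not just the slogan ``maximal pure quotient.''
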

The rational curves in (1) of Proposition \ref{sum2} generically have $D_4$-singularities (Figure~\ref{fig1}). For discussions on singularities of quartic plane curves, we may refer to \cite{Hui79} and \cite[Chapter 9]{CKPU13}. The main goal of this paper is to resolve this locus by means of elementary modification of sheaves (bundles).
\begin{figure}[h]
\begin{tikzpicture}[scale=0.9]
\begin{axis}[axis lines=none]
\addplot[domain=-70:70,samples=300,color=black,very thick,data cs=polar] (x,{sin(3*x)});
\end{axis}
\end{tikzpicture}
\vspace{-1.5em}
\caption{A rational quartic plane curve with a $D_4$-singularity}\label{fig1}
\end{figure}
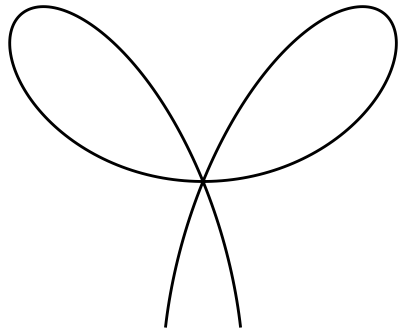
\subsection{Symmetric powers and its basics of $\SL_3$-representation}\label{stdsl3}
Let $\mathcal{F}^{\otimes k}$ be the tensor power of a coherent sheaf $\mathcal{F}$ on a smooth projective variety $X$. The $k$-th symmetric power of $\mathcal{F}$, denoted by $\operatorname{Sym}^k\mathcal{F}$, is defined as the quotient of the tensor power $\mathcal{F}^{\otimes k}$ by the ideal sheaf generated by all elements of the form $s_1\otimes \cdots \otimes s_k - \sigma(s_1)\otimes\cdots\otimes\sigma(s_k)$, where $s_1,\dots,s_k$ are local sections of $\mathcal{F}$ and $\sigma$ is any element of the symmetric group $S_n$ of $n$-letters. Let us denote the equivalence class by $s_1\cdot s_2\cdots \cdot s_k:=[s_1\otimes s_2\otimes \cdots \otimes s_k]\in \operatorname{Sym}^k\mathcal{F}$.
\begin{lemma}
Let $\cE$, $\cF$, and $\cH$ be coherent sheaves on $X$. Consider an exact sequence ${\cE}\stackrel{\delta}{\longrightarrow}{\cF}\stackrel{\epsilon}{\longrightarrow}{\cH}\longrightarrow 0$. Then, for all $k\geq 1$, we obtain an exact sequence 
\[
\cE\otimes \Sym^{k-1} \cF\stackrel{s(\delta)}{\longrightarrow}  \Sym^{k} \cF \stackrel{\text{sym}^k(\epsilon)}{\longrightarrow}\Sym^{k}\cH\lr0
\]
where $s(\delta)$ is defined by
\[
e\otimes(f_1\cdot f_2\cdots f_{k-1})\mapsto \sum_{i=1}^{k} (f_1\cdot f_2\cdots \delta(e) \cdots f_{k-1}).
\]
\end{lemma}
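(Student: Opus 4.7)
The plan is to reduce to the affine/module case and then exploit the universal property of the total symmetric algebra functor $\Sym^{\bullet}$ as a left adjoint. Exactness of a three-term sequence of coherent sheaves is local on $X$, so passing to an affine open $\Spec A\subset X$, I replace $\cE, \cF, \cH$ by $A$-modules $E, F, H$ in an exact sequence $E\xrightarrow{\delta}F\xrightarrow{\epsilon}H\to 0$ and aim to prove exactness of
\[
E\otimes_A \Sym^{k-1}F \xrightarrow{s(\delta)} \Sym^k F \xrightarrow{\Sym^k(\epsilon)} \Sym^k H\to 0.
\]

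First, surjectivity of $\Sym^k(\epsilon)$ is immediate: a generator $h_1\cdots h_k\in \Sym^k H$ lifts to $f_1\cdots f_k$ for any choice $f_i\in \epsilon^{-1}(h_i)$. Second, $\Sym^k(\epsilon)\circ s(\delta)=0$ because $\epsilon\circ\delta=0$ forces $\epsilon(\delta(e))\cdot \epsilon(f_1)\cdots \epsilon(f_{k-1})=0$. For exactness in the middle, I would invoke the fact that $\Sym^{\bullet}$ is left adjoint to the forgetful functor from commutative $A$-algebras to $A$-modules, and hence preserves all colimits. Since $H=\coker(\delta)$, this yields a canonical graded-algebra isomorphism $\Sym^{\bullet}H\cong \Sym^{\bullet}F/J$, where $J$ is the two-sided ideal generated by $\delta(E)\subset F=\Sym^1 F$. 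Taking degree-$k$ components gives $\Sym^k H\cong \Sym^k F/J_k$, with $J_k=\delta(E)\cdot \Sym^{k-1}F$ by a direct expansion of products in the ideal.

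The only subtle step---and what I would flag as the main (albeit minor) obstacle---is matching the specific symmetrized formula $e\otimes(f_1\cdots f_{k-1})\mapsto \sum_{i=1}^{k}f_1\cdots \delta(e)\cdots f_{k-1}$ defining $s(\delta)$ with the ideal-theoretic description $J_k=\delta(E)\cdot \Sym^{k-1}F$. By the $S_k$-symmetry intrinsic to $\Sym^k F$, all $k$ insertion positions produce the same element, so the right-hand side of the formula equals $k\cdot \delta(e)\cdot f_1\cdots f_{k-1}$; the characteristic-zero hypothesis (we work over $\CC$) makes $k$ a unit, so $\operatorname{im}(s(\delta))=J_k$ and the desired middle exactness follows. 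Alternatively, one can bypass the adjoint-functor machinery by a hands-on argument: take $g\in \Sym^k F$ with $\Sym^k(\epsilon)(g)=0$, lift $\bar g\in \Sym^k H$ to a preimage using surjectivity, and then check that the difference is a sum of products in which at least one factor lies in $\delta(E)$, hence belongs to $\operatorname{im}(s(\delta))$.
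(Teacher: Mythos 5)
Your proposal is correct. The paper states this lemma without proof, presenting it as one of the ``well-known facts'' collected in Section 2, so there is no argument in the text to compare against; your reduction to the affine case, the identification $\Sym^{\bullet}H\cong \Sym^{\bullet}F/J$ via the left-adjointness of $\Sym^{\bullet}$, and the computation $J_k=\delta(E)\cdot\Sym^{k-1}F$ constitute the standard proof. Your remark that the symmetrized formula for $s(\delta)$ collapses to $k\cdot\delta(e)\cdot f_1\cdots f_{k-1}$, so that its image agrees with $J_k$ only because $k$ is invertible over $\CC$, is a genuinely relevant observation: with the map as literally written, middle exactness would fail in characteristic $p$ dividing $k$, so flagging the characteristic-zero hypothesis is the right call.
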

For our purpose, we need a standard $\SL_3$-representation theory related with symmetric power spaces. We summarize the contents of Section 13.3 in \cite{FH91}. Let $V$ be a vector space with dimension $\dim V=3$. Then the standard representation of $\Sym^2(\Sym^2 V)$ is isomorphic to the space
\[
\begin{split}
\Sym^2(\Sym^2 V)&\cong \Sym^4 V\oplus \Sym^2V^*\\
&\cong\Sym^4 V\oplus \Sym^2(\wedge^2 V)
\end{split}
\]
where the kernel of the multiplication map $m:\Sym^2(\Sym^2 V)\twoheadrightarrow \Sym^4 V$ is identified with $\text{ker}(m)=\Sym^2(\wedge^2V)\cong \Sym^2V^*$ under a fixed isomorphism $\wedge^2V\cong V^*\otimes \wedge^3V$. Here the embedding is given by \[\Sym^2(\wedge^2V)\subset\Sym^2(\Sym^2 V),\;(u\wedge v) \cdot (w\wedge z)\mapsto (u\cdot w)\cdot(v\cdot z)-(u\cdot z)\cdot (v\cdot w).\]
\begin{example}
Let $(I_Z)_2=\langle x^2, xy, y^2\rangle$ be the degree $2$ part of the ideal of the non-curvilinear point $Z$ supported at $\text{Supp}(Z)=\{[0:0:1]\}$. As a $\CC$-vector space, $(I_Z)_2$ is a subvector space of $\Sym^2V_3$ for $V_3=\text{span}_{\CC}\{x, y, z\}$. One may choose a basis of $\Sym^2(\langle x^2, xy, y^2\rangle)$ as 
\[\left\{x^2\cdot x^2,\; x^2\cdot(xy),\; x^2\cdot y^2,\; (xy)\cdot (xy),\; (xy)\cdot y^2,\; y^2\cdot y^2\right\},\]
which is a subvector space of $\Sym^2(\Sym^2 V_3)$. Note that $x^2\cdot y^2\neq (xy)\cdot (xy)\in \Sym^2(\Sym^2 V_3)$. However, under the canonical multiplication map $m:\Sym^2(\Sym^2 V_3)\twoheadrightarrow \Sym^4 V_3$, they have the same image, i.e., $m(x^2\cdot y^2)=m((xy)\cdot (xy))$. Therefore, we have 
\[(x\wedge y)\cdot(x\wedge y)=x^2\cdot y^2-(xy)\cdot (xy)\in \ker(m).\]
\end{example}
\section{Warm-up: Degree $3$ case}\label{babycase}
In this section, we describe the space $\bR_3$ as a projective bundle based on the results of \cite{CC11}. Even though the result is very elementary, the purpose of this writing is to provide motivation for the degree $4$ case, which will be discussed in Section \ref{d4case}.
\subsection{Rational cubic curves space and its universal complex} Consider a cubic curve $C$ in $\PP^2=\PP(W_3)$ defined by the cubic equation $f(x,y,z)=0$. 
A cubic curve $C$ is rational if it has a singular point by the degree-genus formula.
The cubic curve $C$ passes through the point $q=[0:0:1]$ if and only if $f(x,y,z)$ can be written in the form 
\begin{equation}\label{cubicform}
f(x,y,z)=g_3(x,y)+zg_2(x,y) +z^2g_1(x,y),
\end{equation}
where $g_i(x,y)$ is a homogeneous polynomial of $x, y$ and $\deg g(x,y)=i$. Furthermore, $C$ is singular at the point $q$ if and only if $g_1(x,y)=0$ in \eqref{cubicform}. We describe how one can construct the parameter space of rational cubic curves through the canonical form \eqref{cubicform}. Let $\cE_i$ be the locally free sheaf whose fiber corresponds to cubic forms vanishing of order $i$ at a point in $\PP^2$.
Let $\bD=\Gr(2,W_3^*)$ be the Grassmannian of points in $\PP^2$. By taking symmetric powers of the universal bundle sequence $\ses{\cD}{W_3^*\otimes \cO_{\bD}}{\cQ}$, we obtain a commutative diagram:
\begin{equation}\label{orddia}
\xymatrix{&&\\
&W_3^*\otimes \cQ^{\otimes2}\ar@{=}[r]&W_3^*\otimes \cQ^{\otimes2}\\
\Sym^3\cD\ar@{^{(}->}[r]\ar@{=}[d]&\Sym^3W_3^*\otimes \cO_{\bD}\ar@{->>}[r]\ar@{->>}[u]&\Sym^2W_3^*\otimes \cQ\ar@{->>}[u]\\
\Sym^3\cD\ar@{^{(}->}[r]&\cE_2\ar@{^{(}->}[u]\ar@{->>}[r]&\Sym^2\cD\otimes \cQ\ar@{^{(}->}[u],}
\end{equation}
such that the sheaf $\cE_2$ fits into the bottom row of the diagram \eqref{orddia}.
\begin{proposition}
Under the above notations and definitions,
\begin{enumerate}
\item
There exist inclusions of locally free sheaves on $\bD$:
\[\cE_2\subset \cE_1\subset \Sym^3W_3^*\otimes \cO_{\bD}.\]
Furthermore, the moduli space $\bM_3$ (resp. $\bR_3$)
 is isomorphic to
 \[\bM_3\cong \PP(\cE_1),\; (\text{resp}. \;\bR_3\cong\PP(\cE_2))\] and the quotient $\cE_1/\cE_2$ is isomorphic to a locally free sheaf $\cE_1/\cE_2\cong \cD\otimes \cO_{\bD}(2)$.
 \item The universal family of stable complex over $\bR_3$ is given by the derived dual
 \begin{equation}\label{univ2}
\FF_3:=[\cO_{\bR_3\times \PP^2}\rightarrow \pi_1^*\cO_{\bR_3}(1)\otimes (p\times\text{id})^*(\cI_{\cW}\boxtimes \cO_{\PP^2}(3))]^{\vee}
\end{equation}
where $\cW$ is the universal subscheme of a point over $\bD$ and $p:\PP(\cE_2)\lr \bD$ is the canonical projection map.
\end{enumerate}
 \end{proposition}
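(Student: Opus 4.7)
The plan is to realize $\cE_1$ and $\cE_2$ as successive pieces of the canonical order-of-vanishing filtration on $\Sym^{3}W_3^{*}\otimes\cO_{\bD}$, then transport the projective-bundle structure from a known description of $\bM_3$, and finally read off the universal family from tautological data.

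First, I would take the third symmetric power of the tautological sequence $0\to\cD\to W_3^{*}\otimes\cO_{\bD}\to\cQ\to 0$ on $\bD$ to obtain a four-step filtration of $\Sym^{3}W_3^{*}\otimes\cO_{\bD}$ with associated graded pieces
\[
\Sym^{3}\cD,\quad \Sym^{2}\cD\otimes\cQ,\quad \cD\otimes\Sym^{2}\cQ,\quad \Sym^{3}\cQ.
\]
Reading off the canonical form \eqref{cubicform} fiber-by-fiber at $[q]\in\bD$, I would identify $\cE_1$ with the kernel of the projection onto $\Sym^{3}\cQ$ (the ``no $z^{3}$-term'' locus) and $\cE_2$ with the kernel of the projection onto $\Sym^{3}\cQ\oplus\cD\otimes\Sym^{2}\cQ$ (the ``$g_0=g_1=0$'' locus, which is exactly the subbundle carved out by diagram \eqref{orddia}). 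This at once gives the inclusion chain and identifies $\cE_1/\cE_2$ with the middle graded piece $\cD\otimes\Sym^{2}\cQ$. Since $\cQ$ is a line bundle realizing $\cO_{\bD}(1)$ under the canonical identification $\bD=\Gr(2,W_3^{*})\cong\PP^{2}$, one obtains $\cE_1/\cE_2\cong\cD\otimes\cO_{\bD}(2)$ as claimed.

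Next, I would invoke the description of $\bM_3$ from \cite{CC11}: for a stable sheaf $F$ with $\chi(F(m))=3m+1$, the canonical section $\cO_{\PP^{2}}\to F$ has cokernel $\cO_Z$ for a unique point $Z\in\PP^{2}$, yielding a morphism $\bM_3\to\bD$ whose fiber at $[Z]$ is $\PP\, H^{0}(\cI_Z(3))=\PP(\cE_1|_{[Z]})$, and globally $\bM_3\cong\PP(\cE_1)$. For $\bR_3\cong\PP(\cE_2)$, I would use that a plane cubic is rational precisely when it is singular (arithmetic genus $1$ forces geometric genus $0$ the moment a node or cusp appears), and a cubic is singular at $Z$ exactly when its defining equation lies in $H^{0}(\cI_Z^{\,2}(3))=\cE_2|_{[Z]}$. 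Hence the locus of rational-cubic sheaves is dense in $\PP(\cE_2)\subset\PP(\cE_1)=\bM_3$; since $\PP(\cE_2)$ is closed, irreducible, and of the correct dimension $8$, the closure $\bR_3$ coincides with $\PP(\cE_2)$.

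Finally, for (2), the universal stable complex on $\bM_3$ restricts to $\bR_3$, and unwinding the tautological data shows that the inclusion
\[
\cO_{\PP(\cE_2)}(-1)\hookrightarrow p^{*}\cE_2\hookrightarrow\Sym^{3}W_3^{*}\otimes\cO_{\bR_3}
\]
pairs against the universal square-ideal of a point on $\bD\times\PP^{2}$ to give precisely the two-term complex whose derived dual is $\FF_3$. The main obstacle I foresee is the scheme-theoretic identification $\bR_3\cong\PP(\cE_2)$: set-theoretic equality from irreducibility and dimension count is easy, but promoting this to an isomorphism of moduli functors requires checking that the universal complex over $\PP(\cE_2)$ truly defines a flat family of semistable sheaves with Hilbert polynomial $3m+1$ along the boundary, i.e., at singular or reducible cubics where the section $s$ degenerates.
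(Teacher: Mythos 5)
Your proposal is correct and follows essentially the same route as the paper: your order-of-vanishing filtration of $\Sym^3W_3^*\otimes\cO_{\bD}$ is exactly the content of diagram \eqref{orddia}, giving the inclusions and $\cE_1/\cE_2\cong\cD\otimes\cQ^{\otimes 2}\cong\cD\otimes\cO_{\bD}(2)$, and your relativized tautological complex for (2) matches the paper's relativization of $0\to I_{C|\PP^2}\to I_{q|\PP^2}\to I_{q|C}\to 0$. The moduli identifications $\bM_3\cong\PP(\cE_1)$ and $\bR_3\cong\PP(\cE_2)$, including the flatness/functoriality point you rightly flag, are simply outsourced by the paper to the first paragraph of \cite{CC11}, so your sketch of that step is a harmless elaboration rather than a different approach.
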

 \begin{proof}
 (1) One can easily see that the kernel of the composition map of the middle-upper map of the diagram \eqref{orddia} and the canonical surjective map $W_3^*\otimes \cO_{\bD}\twoheadrightarrow \cQ$ twisted by $\cQ^{\otimes2}$ is the locally free sheaf 
\[
\cE_1=\text{Ker}\left\{\Sym^3W_3^*\otimes \cO_{\bD}\twoheadrightarrow W_3^*\otimes \cQ^{\otimes2}\twoheadrightarrow \cQ^{\otimes3}\right\}.
\]
Thus we prove the inclusion relation. Also the isomorphisms come from the first paragraph of \cite{CC11}. The quotient sheaf $\cE_1/\cE_2$ is finally obtained by analyzing the canonical form in \eqref{cubicform}.

(3) One can construct the universal complex over $\bM_3$ by relativizing the following nested exact sequence
\[
\ses{\cO_{\PP^2}(-3)=I_{C|\PP^2}}{I_{q|\PP^2}}{I_{q|C}}
\]
of ideals for the pair $q\in C$ for some point $q$ and cubic curve $C$ in $\PP^2$. The claimed universal complex in \eqref{univ2} is nothing but the pull-back of this complex along the inclusion map $\PP(\cE_2)\times\PP^2\subset \PP(\cE_1)\times\PP^2$.
\end{proof}
\section{Degree $4$ case}\label{d4case}
In this section, we first construct a birational model of $\bR_4$, which is a projectivization of a symmetrized bundle on the Kronecker modules space (Proposition \ref{mainprop1}). Secondly, by studying the undefined loci and flattening of a family of subschemes (Proposition \ref{mainprop2}), we modify the bundle and thus obtain an extended birational morphism into $\bR_4$ (Theorem \ref{mainthm}). Finally, we explain the contraction of the exceptional locus into $\bR_4$. All of these works are an $\SL_3$-invariant one.
\subsection{Universal bundles, symmetrization and its projectivization}
In this subsection, we construct a birational map between a $\PP^5$-projective bundle over $\bK$ and the rational curves space $\bR_4$ (Proposition \ref{mainprop1}). The canonical form (\cite[Section 3.2]{Chu21}) of the rational quartic plane curve suggests that a natural candidate for such a bundle is the symmetric product $\Sym^2\cF$ of the universal subbundle $\cF$ in \eqref{univinc}. Let
\begin{equation}\label{symten}
\Sym^2\cF\subset \cF\otimes \cF
\end{equation}
be the inclusion map induced by $v\cdot w\mapsto \frac{1}{2}(v\otimes w+w\otimes v)$ for $v, w\in \cF$. As composing the inclusion map \eqref{symten} and the second map in $\psi$ in \eqref{univseq} tensored with $\cF\boxtimes(-)$, we have a morphism over $\bK\times \PP^2$:
\begin{equation}\label{comk}
k:\Sym^2\cF\subset \cF\otimes \cF \stackrel{\text{id}\otimes \psi}{\longrightarrow} \cF\boxtimes \cO_{\PP^2}(2).
\end{equation}
Now by the tensoring the line bundle $\cO_{\PP^2}(2)$ in the universal complex \eqref{univseq} and combining with the map $k$ in equation \eqref{comk}, we get a commutative diagram:
\begin{equation}\label{univcom}
\xymatrix{
\Sym^2\cF\ar[d]^{k}\ar@/_0.7pc/[drr]^{(\psi\otimes \text{id})\circ k}&&\\
\cF\boxtimes \cO_{\PP^2}(2)\ar[rr]_{\psi\otimes \text{id}}\ar@{->>}[dr]^{s}&&\cO_{\bK}\boxtimes \cO_{\PP^2}(4)\\
&\cM:=\text{coker}(\phi)\otimes \cO_{\PP^2}(2)\ar[ur]_{\bar{\psi}\otimes \text{id}}&
}
\end{equation}
over $\bK\times \PP^2$. The push-forward along the projection map $p:\bK\times \PP^2\lr \bK$ of the diagram \eqref{univcom} provides
\begin{equation}\label{pushdown}
\xymatrix{p_*(\Sym^2p^*\cF)=\Sym^2\cF\ar[rr]\ar[rd]^{\bar{s}}&&p_*\cO_{\PP^2}(4)=\Sym^4W_3^*\otimes \cO_{\bK}\\
&p_*\cM=\cU.\ar[ur]&}
\end{equation}
We need to introduce a special locus in $\bK$ for the analysis of the induced morphism $\bar{s}$ in \eqref{pushdown}, which will turn out to be a degeneracy locus.
\begin{definition}\label{degdef}
Let us define $\bD_5$ as the locus of \emph{non-curvilinear} points in $\PP^2$ (with the reduced scheme structure) that are $\SL_3$-orbits of the ideal $I_{q}^2=\langle x^2,xy,y^2\rangle$ for the coordinate point $q=[0:0:1]$.
\end{definition}
In an extrinsic sense, one can expect that $\bD_5$ is isomorphic to $\Gr(2, W_3^*)$.
\begin{lemma}\label{deglem}
Let $\bD:=\Gr(2, W_3^*)$ be the Grassmannian of points in the projective plane $\PP^2=\PP(W_3)$. Then there exists an embedding $j:\bD\hookrightarrow \bK$ whose image is $\bD_5$.
\end{lemma}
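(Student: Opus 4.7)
The plan is to construct $j$ directly from the tautological subbundle on $\bD$, factor it through $\bH[3]\setminus\widetilde{\bD}_3$ to land inside $\bK$, verify it is a closed immersion by a one-point tangent-space calculation, and identify its image with $\bD_5$ via $\SL_3$-equivariance.

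Let $\mathcal{D}\subset W_3^*\otimes\cO_{\bD}$ be the tautological rank-$2$ subbundle on $\bD=\Gr(2,W_3^*)$, whose fiber at a point $q\in\bD=\PP(W_3)$ is the $2$-plane of linear forms on $W_3$ vanishing at $q$. Taking the second symmetric power yields a rank-$3$ subbundle $\Sym^2\mathcal{D}\subset\Sym^2 W_3^*\otimes\cO_{\bD}$ and hence a classifying morphism $\tilde{j}\colon\bD\to\Gr(3,\Sym^2 W_3^*)$, with $\tilde{j}(q)=(\Sym^2\mathcal{D})_q=\langle \ell_1^2,\ell_1\ell_2,\ell_2^2\rangle$ when $\mathcal{D}_q=\langle \ell_1,\ell_2\rangle$. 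This subspace is exactly the degree-$2$ piece $\rH^0(\PP^2,I_{Z_q}(2))$ of the square $\mathfrak{m}_q^2$ of the maximal ideal, where $Z_q=V(\mathfrak{m}_q^2)$ is the non-curvilinear length-$3$ subscheme at $q$. Since $\mathfrak{m}_q^2$ contains no nonzero linear form, $Z_q$ is not contained in any line, so $[Z_q]\in\bH[3]\setminus\widetilde{\bD}_3$. Using the isomorphism $\pi\colon\bH[3]\setminus\widetilde{\bD}_3\xrightarrow{\sim}\bK\setminus\bD_3$ recalled in Section \ref{hilbpt}, $\tilde{j}$ factors as $j\colon\bD\to\bK\setminus\bD_3\hookrightarrow\bK$ with $j(q)=[Z_q]$.

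To show $j$ is a closed immersion, set-theoretic injectivity is immediate: $j(q_1)=j(q_2)$ gives $Z_{q_1}=Z_{q_2}$ and taking supports yields $q_1=q_2$. For injectivity of the differential, $\SL_3$-equivariance of $j$ and transitivity of $\SL_3$ on $\bD\cong\PP^2$ reduce the check to one point $q=[0:0:1]$ with $\mathcal{D}_q=\langle x,y\rangle$. A tangent vector $\phi\in\Hom(\mathcal{D}_q,W_3^*/\mathcal{D}_q)=T_q\bD$, written $x\mapsto\alpha z$, $y\mapsto\beta z$, induces the deformation of $(\Sym^2\mathcal{D})_q=\langle x^2,xy,y^2\rangle$ inside $\Sym^2 W_3^*/(\Sym^2\mathcal{D})_q=\langle xz,yz,z^2\rangle$ given by
$$x^2\mapsto 2\alpha xz,\qquad xy\mapsto\alpha yz+\beta xz,\qquad y^2\mapsto 2\beta yz,$$
which vanishes only when $\alpha=\beta=0$. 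Hence $dj_q$ is injective, and combined with properness of $\bD$ and set-theoretic injectivity, $j$ is a closed immersion.

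Finally, $j(\bD)$ is a closed $\SL_3$-invariant subset of $\bK$, and transitivity of $\SL_3$ on $\bD$ forces $j(\bD)$ to equal the $\SL_3$-orbit of $j([0:0:1])=[\langle x^2,xy,y^2\rangle]$, which is precisely $\bD_5$ by Definition \ref{degdef}. I do not foresee a serious obstacle: the only nontrivial input is the tangent-space computation, and equivariance reduces that to the elementary calculation above.
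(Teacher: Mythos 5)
Your proposal follows essentially the same route as the paper: both construct $j$ as the classifying map of the rank-$3$ subbundle $\Sym^2\cD\subset\Sym^2W_3^*\otimes\cO_{\bD}$ into $\Gr(3,\Sym^2W_3^*)$ and identify the image with $\bD_5$ fiberwise. Your version merely fills in details the paper leaves implicit (the explicit tangent-space computation showing $dj_q$ is injective and the $\SL_3$-equivariance argument identifying the image), and these details are correct.
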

\begin{proof}
Taking the second symmetric power of the universal sequence $\ses{\cD}{W_3^*\otimes \cO_{\bD}}{\cQ}$ of the Grassmannian $\bD$, we obtain the following sequence:
\begin{equation}\label{fatseq}
0\longrightarrow \Sym^2\cD\longrightarrow \Sym^2W_3^* \otimes \cO_{\bD} \longrightarrow (W_3^* \otimes \cO_{\bD})\otimes \cQ\longrightarrow 0,
\end{equation}
which defines an embedding map into $\Gr(3,\Sym^2W_3^*)$. As checking the image fiberwisely, the embedding map factors through $\bD_5\subset \bK$.
\end{proof}
\begin{remark}\label{wuniv}
When we identify $\bK$ with the Hilbert scheme $\bH[3]$ of three points in $\PP^2=\PP^2(W_3)$, one can obtain a family of subschemes of \emph{non-curvilinear} points parametrized by $\bD$.
Let $\cW$ be the universal family of points in $\PP^2$ over $\bD$. Then the relative Koszul complex of the ideal sheaf $\cI_{\cW}$ is acyclic:
\begin{equation}\label{symuniv}
0\lr \wedge^2\cD \boxtimes \cO_{\PP^2}(-2)\lr\cD \boxtimes \cO_{\PP^2}(-1)\lr \cI_{\cW}\lr 0.
\end{equation}
By taking the second symmetric power of the sequence \eqref{symuniv}, we have an exact sequence
\[
0\lr (\wedge^2\cD\otimes \cD)\boxtimes \cO_{\PP^2}(-3)\lr \Sym^2\cD \boxtimes \cO_{\PP^2}(-2)\lr \Sym^2\cI_{\cW}=\cI^2_{\cW}\lr 0.
\]
Since $\cI^2_{\cW}\subset \cI_{\cW}\subset \cO_{\bD\times \PP^2}$, one can easily check that $\cI^2_{\cW}$ is the flat family of non-curvilinear points.
\end{remark}

Now we are ready to state and prove one of the main propositions. Recall that $\bR_4$ is the closure of the space of rational quartic curves in $\PP^2$ under $\bM_4$. Also $\bR_4\subset \bQ_4\setminus \bF^{-}=\PP(\cU)\setminus \bF^{-}$ where $\bF^{-}$ is the flipping locus in $\bQ_4$ (Section \ref{fliplocus}).
\begin{proposition}\label{mainprop1}
Let $\cV:=\Sym^2\cF$ in equation \eqref{pushdown} and $\cV^{\circ}$ be the restriction of $\cV$ along the open subset $\bK\setminus \bD_5$. Then there exists an embedding map
\[
\Psi:\PP(\cV^{\circ}) \xhookrightarrow{}{} \PP(\cU)\setminus \bF^{-}
\]
such that the image of $\Psi$ is birational to the space $\bR_4$.
\end{proposition}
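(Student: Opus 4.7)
The strategy is to exhibit $\bar{s}:\cV=\Sym^2\cF\to\cU$ from the push-forward diagram \eqref{pushdown} as an injection of locally free sheaves over $\bK\setminus\bD_5$; since source and target have constant ranks $6$ and $12$, injectivity as a sheaf morphism is equivalent to fiberwise injectivity. Composing $\bar{s}$ with the canonical evaluation $\cU\to\Sym^4W_3^*\otimes\cO_{\bK}$ in \eqref{pushdown}, the composition becomes the fiberwise polynomial multiplication $\mu_{[Z]}:\Sym^2\cF_{[Z]}\to\Sym^4W_3^*$, $f\cdot g\mapsto fg$; hence it suffices to determine the locus in $\bK$ on which $\mu_{[Z]}$ has nontrivial kernel.

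For this fiberwise analysis I would use $\SL_3$-equivariance to reduce each orbit to a normal form. For $[Z]\in\bK\setminus(\bD_3\cup\bD_5)$, the scheme $Z$ is curvilinear and a direct check in each such orbit shows that the three generators of $\cF_{[Z]}=H^0(I_Z(2))$ admit no quadratic syzygy, so $\mu_{[Z]}$ is injective. For $[l]\in\bD_3$, the fiber $\cF_{[l]}$ equals $l\cdot W_3^*\subset\Sym^2W_3^*$, and $\mu_{[l]}((lm_1)\cdot(lm_2))=l^2m_1m_2$ is multiplication by $l^2$ on $\Sym^2W_3^*$, which is manifestly injective. For $[Z]\in\bD_5$, the standard form $I_q^2=\langle x^2,xy,y^2\rangle$ given by $\SL_3$-equivariance together with the computation at the end of Section \ref{stdsl3} identifies $\ker\mu_{[Z]}$ as the one-dimensional subspace spanned by $x^2\cdot y^2-(xy)\cdot(xy)$. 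Hence the non-injectivity locus of $\bar{s}$ is precisely $\bD_5$, and $\bar{s}|_{\bK\setminus\bD_5}$ identifies $\cV^\circ$ with a rank-$6$ subbundle of $\cU|_{\bK\setminus\bD_5}$; projectivizing gives the $\PP^5$-bundle embedding $\Psi:\PP(\cV^\circ)\hookrightarrow\PP(\cU|_{\bK\setminus\bD_5})$.

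To see that the image avoids $\bF^-$, Proposition \ref{conh1}(1) gives $\bF^-\subset p^{-1}(\bD_3)$, so only the fibers over $\bD_3$ need attention (noting $\bD_3\cap\bD_5=\emptyset$, since a non-curvilinear ideal $I_q^2$ cannot be contained in any line ideal, so $\bD_5$ sits in $\bK\setminus\bD_3$). At $[l]\in\bD_3$, the factorization \eqref{fact1} induces a surjection $\cU_{[l]}=H^0(E_{[l]}(2))\twoheadrightarrow H^0(\cO_{\PP^2}(3))$ extracting the $s_2$-component in the splitting of Proposition \ref{conh1}; tracing through the definition of $\bar{s}$, the element $(lm_1)\cdot(lm_2)$ has $s_2$-component equal to the cubic $2\,l\,m_1m_2$, and more generally a nonzero $\sigma\in\Sym^2\cF_{[l]}$ has $s_2$-component $l\cdot q$ with $q=\mu_{[l]}(\sigma)/l^2\in\Sym^2W_3^*$ nonzero. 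By Proposition \ref{conh1}(2), the image of $\Psi$ therefore lies in $\PP(\cU)\setminus\bF^-$.

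Finally, for birationality with $\bR_4$: by Proposition \ref{sum2}, $\pi:\bR_4\to\bK$ is a generic $\PP^5$-fibration whose general fiber, under the flip identification $\bM_4\setminus\bF^-\cong\bQ_4\setminus\bF^+$, is the linear subspace $\PP(H^0(I_Z^2(4)))\subset\PP(\cU_{[Z]})=\PP(H^0(I_Z(4)))$ of plane quartics singular at $Z$. For $[Z]$ general, both $\Sym^2H^0(I_Z(2))$ and $H^0(I_Z^2(4))$ have dimension $6$, so $\mu_{[Z]}$ is an isomorphism onto $H^0(I_Z^2(4))$, and $\Psi$ identifies the fiber of $\PP(\cV^\circ)$ at $[Z]$ with the fiber of $\bR_4\to\bK$ at $[Z]$; this produces the desired birational map onto a dense open of $\bR_4$. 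The main technical obstacle is the analysis over $\bD_3$, where the universal complex $\KK^\bullet$ is non-exact and $\coker(\phi)$ carries the extension structure \eqref{eq011}, so both the injectivity of $\bar{s}$ and the avoidance of $\bF^-$ must be verified through the structure of $E_{[l]}$ rather than through the generic formula $\cU_{[Z]}=H^0(I_Z(4))$; the $\SL_3$-equivariance is indispensable in reducing the case analysis to standard coordinate forms.
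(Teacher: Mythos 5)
Your proposal is correct and follows essentially the same route as the paper: identify $\bar{s}$ fiberwise with the multiplication map into $\Sym^4W_3^*$, reduce to $\SL_3$-normal forms in the three cases $\bK\setminus(\bD_3\cup\bD_5)$, $\bD_3$ (where the image is $l^2\cdot\Sym^2W_3^*$ and the nonzero cubic part rules out $\bF^-$ via Proposition \ref{conh1}), and $\bD_5$ (where injectivity fails, with kernel $x^2\cdot y^2-(xy)\cdot(xy)$ as in the Example of Section \ref{stdsl3}). Your explicit identification of the one-dimensional kernel over $\bD_5$ and the fiberwise matching with Proposition \ref{sum2} for birationality are slightly more detailed than the paper's, but the argument is the same.
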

\begin{proof}
Note that $\bD_3\cap \bD_5=\emptyset$ by its definition. For each point $[Z]\in \bK\setminus (\bD_3\sqcup \bD_5)$, the map $\bar{s}$ in \eqref{pushdown} is nothing but the canonical isomorphism map \[\cV_{[Z]}=\Sym^2\rH^0(I_Z(2)) \cong \rH^0(I_Z^2(4))\subset \rH^0(I_Z(4))=\cU_{[Z]},\] where the second map comes from the inclusion $I_Z^2\subset I_Z$.
Since we can easily check that $h^0(I_Z^2(4))=6$ for each representative $[Z]$ of the $\SL_3$-orbits, $\bar{s}$ is injective.

For the closed point $[l]\in \bD_3(=\Gr(1,W_3^*))$, then the fiber of $\cU_{[l]}=\rH^0(E_{[l]}(2))$ (cf. \eqref{eq011}) fits into the exact sequence
\[
\ses{\rH^0(\cO_{l}(1))}{\cU_{[l]}}{\rH^0(\cO_{\PP^2}(3))}.
\]
Also the map $\cU_{[l]}\lr p_*\cO_{\PP^2}(4)_{[l]}$ in \eqref{pushdown} factors through the multiplication map $l\cdot(-):\rH^0(\cO_{\PP^2}(3))\lr \rH^0(\cO_{\PP^2}(4))\cong\Sym^4W_3^*$ (cf. \eqref{fact1}). Up to an $\SL_3$-equivalence, let $l=\langle x\rangle$. Also we assume that $\rH^0(\cF_{[l]})=\text{span}_{\CC}(x^2,xy,xz)$. Then the image of $\cV_{[l]}$ by the map $\bar{s}_{[l]}$ in $\Sym^4W_3^*$ is $x^2\cdot \Sym^2W_3^*$ and thus it is injective. Additionally, the image has a nonzero cubic part and hence it lies the complement $\PP(\cU)\setminus \bF^{-}$ ((2) of Proposition \ref{sum2}).

Lastly, for the ideal $I_{q}=\langle x, y\rangle\in \bD(\cong\bD_5)$, the image by $\bar{s}$ is given by $\rH^0(\PP^2, I_{q}^4(4))\cong \Sym^4\langle x, y\rangle$ and thus it is not injective. Since $\bD_5$ is transitive under the $\SL_3$-action, this property holds for every point in $\bD_5$.
\end{proof}
From the proof of Proposition \ref{mainprop1}, the map $\bar{s}$ in \eqref{pushdown} is generically injective (hence injective as sheaf homomorphism). Furthermore, the map $\bar{s}$ decomposes into the following way. By taking the symmetric power of the equation \eqref{univinc} and projecting into the first component (Section \ref{stdsl3}), we have
\[
\xymatrix{\cV=\Sym^2\cF\ar@{^{(}->}[r]^>>>>>>>i\ar@/_2.5pc/[ddr]_{p_1\circ i}&\Sym^2(\Sym^2W_3^*)\otimes \cO_{\bK}\ar@{=}[d]\\
&(\Sym^4 W_3^*\oplus \Sym^2(\wedge^2 W_3^*))\otimes \cO_{\bK}\ar[d]^{p_1}\\
&\Sym^4 W_3^*\otimes \cO_{\bK}}.
\]
By its construction, the map $\bar{s}$ in \eqref{pushdown} is simply the composition of the map $p_1$ followed by the inclusion map $i$. The proof of Proposition \ref{mainprop1} tells us that the map $\bar{s}=p_1\circ i$ is injective outside $\bD_5$.
\subsection{Modification of the symmetrized bundle via ideal flattening}\label{modsec}
In this subsection, by modifying the bundle $\cV=\Sym^2\cF$ on $\bK$ along $\bD_5$, we obtain a subbundle $\cV'$ of $\cU$ such that the quotient sheaf $\cU/\cV'$ is locally free (Theorem \ref{mainthm}). From this, we can deduce the existence of a morphism $\PP(\cV')\lr \PP(\cU)$ and therefore a morphism to the space $\bR_4$.

Let $\im((\psi\otimes \text{id})\circ k)$ be the image of $\Sym^2\cF$ in \eqref{univcom}. Since our modification will be done along the complement of $\bD_3=\Gr(1,W_3^*)$ in $\bK$ (Section \ref{hilbpt}), we can assume that $\im((\psi\otimes \text{id})\circ k)\subset \cM\cong \cI_{\cZ}(4)$ where $\cZ$ is the universal subscheme over $\bK$ and thus the image becomes a twisted ideal sheaf
\[
\im((\psi\otimes \text{id})\circ k)\cong \cI_{\cS}(4)\subset \cI_{\cZ}(4)\subset \cO_{\bK}\boxtimes \cO_{\PP^2}(4)
\]
for a subscheme $\cS\subset\bK\times \PP^2$. As shown in the proof of Proposition \ref{mainprop1}, the fibers over $\bK$ are determined by
\begin{equation}\label{do109}
\cI_{\cS}|_{[Z]\times\PP^2}=
\begin{cases}
I_Z^2,\;[Z]\in\bK\setminus \bD_5,\\
I_{q}^4,\; [Z]=[q^2]\in \bD_5.
\end{cases}
\end{equation}
From now on, we will denote $q^k$ by the subscheme defined by the ideal $I_q^k$ for a point $q\in\PP^2$.
Since the first (resp. second) case in \eqref{do109} has constant Hilbert polynomial $10$ (resp. $9$), $\cS$ is a flat family of subschemes in $\PP^2$ over $\bK\setminus \bD_5$. We will blow up $\bK$ along $\bD_5$ and extend the flat family as follows: Let $\pi: \widetilde{\bK}\lr \bK$ be the blow-up of $\bK$ along $\bD_5$. Pull back $\cS$ along the map $\pi\times \text{id}:\widetilde{\bK}\times \PP^2\lr \bK\times \PP^2$, and denote the resulting scheme by $\widetilde{\cS}$. Let $\widetilde{\bD}_{5}$ be the exceptional divisor of $\bD_5$, and let $\widetilde{\cS}_{0}$ be the restriction of $\widetilde{\cS}$ over $\widetilde{\bK}\setminus \widetilde{\bD}_{5}$. Finally, let us denote $\overline{\cS}$ as the \emph{scheme-theoretic} image of $\widetilde{\cS}_{0}$ in $\widetilde{\bK}\times \PP^2$.
\[
\xymatrix{\overline{\cS}:=\overline{\widetilde{\cS}_{0}}\ar@{^{(}->}[r]\ar[rd]&\widetilde{\bK}\times \PP^2\ar[d]&\widetilde{\bD}\times \PP^2\ar[r]^{\pi|_{\widetilde{\bD}}\times \text{id}}\ar[d]\ar@{_{(}->}[l]_{\widetilde{j}\times{\text{id}}}&\bD\times\PP^2\ar@/^1.5pc/[ddl]\\
&\widetilde{\bK}\ar[d]^{\pi}&\widetilde{\bD}:=\bD\times_{\bK}\widetilde{\bK}\ar@{_{(}->}[l]_<<<<<{\widetilde{j}}\ar[d]^{\pi|_{\widetilde{\bD}}}&\\
&\bK&\bD.\ar@{_{(}->}[l]_{j}&}
\]
Let $\widetilde{\cZ}=(\pi\times \text{id})^{-1} (\cZ)$ be the pull-back of the universal subscheme $\cZ$ along the map $\pi\times \text{id}$. Recall that $j:\bD\hookrightarrow \bK$ is the embedding map (Lemma \ref{deglem}). Also $\cW$ is the universal family of single point in $\PP^2$ parameterized by $\bD$ (Remark \ref{wuniv}).
\begin{proposition}\label{mainprop2}
Under the above notations and definitions, there exists an \emph{nested} inclusion
\begin{equation}\label{netin}
\widetilde{\cZ}\subset \overline{\cS}\subset\widetilde{\cS}\subset\widetilde{\bK}\times \PP^2
\end{equation}
such that $\overline{\cS}$ is a flat family of subschemes in $\PP^2$ over $\widetilde{\bK}$.
\end{proposition}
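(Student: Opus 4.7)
My plan is to handle the two inclusions by a scheme-theoretic closure argument and then reduce the flatness statement to an equivariant Kodaira-Spencer calculation at a single reference point of $\bD_5$. The inclusion $\overline{\cS}\subset\widetilde{\cS}$ is immediate, since $\widetilde{\cS}$ is closed in $\widetilde{\bK}\times\PP^{2}$ and contains the open subscheme $\widetilde{\cS}_{0}$. For $\widetilde{\cZ}\subset\overline{\cS}$, the fiberwise inclusion $I_{Z}^{2}\subset I_{Z}$ gives $\cZ\subset\cS$ over $\bK\setminus\bD_5$, so $\widetilde{\cZ}|_{\widetilde{\bK}\setminus\widetilde{\bD}_5}\subset\widetilde{\cS}_{0}$; because the universal family $\cZ\to\bK$ is flat and $\widetilde{\cZ}$ is its pull-back along $\pi\times\op{id}$, the scheme $\widetilde{\cZ}$ is itself $\widetilde{\bK}$-flat and hence agrees with the scheme-theoretic closure of its restriction to any dense open, yielding $\widetilde{\cZ}\subset\overline{\cS}$.

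For the flatness of $\overline{\cS}\to\widetilde{\bK}$, I note that this map is proper and $\widetilde{\bK}$ is smooth (being the blow-up of smooth $\bK$ along the smooth subvariety $\bD_5$), so flatness is equivalent to constancy of the Hilbert polynomial on fibers. This is clear on $\widetilde{\bK}\setminus\widetilde{\bD}_5$, and by the $\SL_3$-equivariance of the construction together with the transitivity of $\SL_3$ on $\bD_5$, it suffices to verify the constancy on the exceptional $\PP(N_{\bD_5/\bK,[q^2]})\cong\PP^{3}$ above a reference point $[q^2]\in\bD_5$ with $q=[0\!:\!0\!:\!1]$ and $I_{q}=\langle x,y\rangle$. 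For each $[\alpha]\in\PP^{3}$, take a smooth arc $\op{Spec}\CC[[t]]\to\widetilde{\bK}$ meeting the exceptional divisor transversely at $[\alpha]$; its image in $\bK$ is a family $Z_t$ of curvilinear triples deforming $q^2$ with Kodaira-Spencer class $\alpha$, and the fiber of $\overline{\cS}$ at $[\alpha]$ is the $t$-saturated limit $\lim_{t\to 0}I_{Z_t}^{2}$.

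Writing generators $f_i(t)=f_i(0)+t\tilde f_i+O(t^{2})$ with $f_1=x^2,f_2=xy,f_3=y^2$ and $\tilde f_i\in I_q/I_q^{2}$ encoding $\alpha$ (the Koszul syzygies for $I_{q^2}$ over $\cO_{q^2}$ force the constant parts of $\tilde f_i$ to vanish), the relation $x^{2}\cdot y^{2}-(xy)^{2}=0$ lifts to
\[
(x^{2}+t\tilde f_1)(y^{2}+t\tilde f_3)-(xy+t\tilde f_2)^{2}=t\bigl(x^{2}\tilde f_3+y^{2}\tilde f_1-2xy\tilde f_2\bigr)+O(t^{2}),
\]
so after $t$-saturation the flat-limit ideal is $I_q^{4}+(g_\alpha)$ with $g_\alpha:=x^{2}\tilde f_3+y^{2}\tilde f_1-2xy\tilde f_2\in I_q^{3}$. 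Since $g_\alpha$ has local order $3$, we have $I_q\cdot g_\alpha\subset I_q^{4}$, so the principal ideal $(g_\alpha)$ enlarges $I_q^{4}$ by exactly a one-dimensional subspace modulo $I_q^{4}$; moreover $g_\alpha\in I_q^{2}=I_{q^2}$ gives the nested inclusion $\widetilde{\cZ}\subset\overline{\cS}$ on the exceptional divisor.

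The main technical obstacle is to show that the assignment $\alpha\mapsto g_\alpha$ defines an $\SL_3$-equivariant linear isomorphism $\kappa\colon N_{\bD_5/\bK,[q^2]}\xrightarrow{\,\sim\,}I_q^{3}/I_q^{4}$ between two $4$-dimensional irreducible representations of the stabilizer of $[q^2]$ in $\SL_3$. Equivariance forces $\kappa$ to be either zero or an isomorphism, and a direct computation in the basis $\tilde f_1=A_1 x+A_2 y,\tilde f_2=B_1 x+B_2 y,\tilde f_3=C_1 x+C_2 y$ of the $6$-dimensional $T_{[q^2]}\bK$ shows that the induced map $(A_1,A_2,B_1,B_2,C_1,C_2)\mapsto(C_1,C_2-2B_1,A_1-2B_2,A_2)\in I_q^{3}/I_q^{4}$ has kernel exactly the $2$-dimensional subspace $T_{[q^2]}\bD_5$ corresponding to translating $q\in\PP^2$. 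Hence $\kappa$ is an isomorphism, $g_\alpha\neq 0$ for every $[\alpha]\in\PP^{3}$, the colength of the flat-limit ideal drops uniformly by one from that of $\cO/I_q^{4}$ to match the generic value for $I_Z^2$ with $Z$ curvilinear, and the flatness of $\overline{\cS}\to\widetilde{\bK}$ follows.
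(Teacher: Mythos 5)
Your computation at the reference point $[q^2]$ is correct and is essentially the paper's equivariant Kodaira--Spencer calculation in disguise: the assignment $\alpha\mapsto g_\alpha=x^2\tilde f_3+y^2\tilde f_1-2xy\tilde f_2$ is exactly the map $\overline{\mathrm{KS}}\colon N_{\bD_5|\bK,q^2}\lr \Ext^1_{\PP^2}(\CC_q[1],\cO_{q^4})$ of diagram \eqref{ksdia}, and your explicit verification that its kernel is precisely $T_{[q^2]}\bD_5$ is in fact more complete than the paper's argument, which evaluates only one tangent direction and invokes Schur's lemma. Where you genuinely diverge is in how flatness is extracted: the paper packages the colength-$9$ limits as fibers of an elementary modification of complexes and then appeals to the relative Quot scheme to produce the flat family $\overline{\cS}$, whereas you work directly with the scheme-theoretic closure and the Hilbert-polynomial criterion over the integral base $\widetilde{\bK}$.

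That is where the gap sits. The sentence ``the fiber of $\overline{\cS}$ at $[\alpha]$ is the $t$-saturated limit $\lim_{t\to 0}I_{Z_t}^{2}$'' only gives you one inclusion for free: since $\overline{\cS}\times_{\widetilde{\bK}}T$ is closed and contains the family over the punctured arc, it \emph{contains} the flat limit, so the fiber $\overline{\cS}_{[\alpha]}$ has length $\geq 9$; combined with $\overline{\cS}\subset\widetilde{\cS}$ its length is $9$ or $10$. To rule out $10$ you must exhibit $g_\alpha$ in the ideal of the fiber of $\overline{\cS}$ itself, i.e.\ produce a section of $\cI_{\overline{\cS}}(4)$ near $[\alpha]$ restricting to $g_\alpha$ --- membership in the limit ideal of a single arc is the wrong direction. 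The fix is short but is exactly the point of the blow-up: the fiberwise kernel of $\bar{s}$ along $\bD_5$ is the line subbundle $\Sym^2(\wedge^2\cD)\subset\Sym^2(\Sym^2\cD)=\cV|_{\bD_5}$ (spanned at $[q^2]$ by $x^2\cdot y^2-(xy)\cdot(xy)$, as in the paper's example), so for a local section $\sigma$ of $\cV$ restricting into this line, $\bar{s}(\sigma)$ vanishes on $\bD_5$; by the universal property of the blow-up its pullback is divisible by a local equation $e$ of $\widetilde{\bD}_5$, and $\pi^*\bar{s}(\sigma)/e$ is a regular section that lies in $\cI_{\overline{\cS}}(4)$ (it agrees with $\bar{s}(\sigma)/e\in\cI_{\widetilde{\cS}_0}(4)$ off the divisor) and restricts to $g_\alpha$ on the fiber over $[\alpha]$. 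This is precisely the extra rank-one piece $\cA\cong\cO_{\widetilde{\bD}_5}(\widetilde{\bD}_5)\otimes\pi|_{\widetilde{\bD}_5}^*\cO_{\bD_5}(-2)$ of Proposition \ref{chowset}. With that step inserted your argument closes; without it, the Hilbert-polynomial criterion has not been verified.
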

\begin{proof}
The flat extension of subschemes can be regarded as a modification of sheaves (or complexes).
Let $\widetilde{\cW}:=(\pi|_{\widetilde{\bD}}\times{\text{id}})^*(\cW)$.
In our setting, let
\[
\cG=\text{Ker}\left\{\cO_{\widetilde{\cS}}\oplus (\widetilde{j}\times{\text{id}})_*\cO_{\widetilde{\cW}}[1]\twoheadrightarrow \cO_{\widetilde{\cS}}|_{\widetilde{\bD}_5\times\PP^2}\oplus \cO_{\widetilde{\cW}}[1]\twoheadrightarrow \cO_{\widetilde{\cS}}|_{\widetilde{\bD}_5\times\PP^2}\right\}
\]
be the elementary modification of the complex $\cO_{\widetilde{\cS}}\oplus (\widetilde{j}\times{\text{id}})_*\cO_{\widetilde{\cW}}[1]$ along $\widetilde{\bD_5}\times\PP^2\cong \widetilde{\bD}\times\PP^2$ by $\cO_{\widetilde{\cS}}|_{\widetilde{\bD}_5\times\PP^2}$ (\cite[Proposition 2.1]{Lo13}). The effect of the modification is to change the sub/quotient sheaf. Specifically, for each closed point $(q^2,v)\in \widetilde{\bD}_5=\PP(\cN_{\bD_5|\bK})$, $[I_q]=[W_2^*=\langle x,y\rangle] \in \Gr(2,W_3^*)$, the modified complex fits into the following exact sequence:
\[
0\lr \cO_{q^4}\lr \cG|_{(p^2,v)\times \PP^2}\lr \CC_q[1]\lr0.
\]
We show that the complex $\cG|_{(q^2,v)\times \PP^2}$ is isomorphic to a non-split one (i.e., structure sheaf) by analyzing the Kodaira-Spencer map as below. 
\begin{equation}\label{ksdia}
\xymatrix{
T_{q^2} \ar[dd]\bK\ar[r]^<<<<{\text{KS}}& \Ext_{\PP^2}^1(\cO_{q^4}\oplus \CC_q[1], \cO_{q^4}\oplus \CC_q[1]) \ar[d]\\
&\Ext_{\PP^2}^1(\CC_q[1], \cO_{q^4}\oplus \CC_q[1])\ar[d] \\
N_{\bD_5|\bK,q^2}\ar[r]^<<<<<<<<{\overline{\text{KS}}}&\Ext_{\PP^2}^1(\CC_q[1],  \cO_{q^4}).}
\end{equation}
One can easily see that the map $\text{KS}$ in \eqref{ksdia} descents to the normal space $N_{\bD_5|\bK, q^2}$. Also it induces a linear map $\overline{\text{KS}}$ into
\[
\Ext_{\PP^2}^1(\CC_q[1],  \cO_{q^4})\cong \Hom_{\PP^2}(\CC_q,  \cO_{q^4})\cong\Ext^1_{\PP^2}(\CC_q,  I_{q^4})\cong (\wedge^2 W_2^*)^{\otimes 2}\otimes \Sym^3 W_2^*,
\] 
where the last isomorphism comes from the resolution
\[
0\longrightarrow(\wedge^2 W_2^*\otimes \Sym^3 W_2^*) \boxtimes \cO_{\PP^2}(-5)\longrightarrow \Sym^4 W_2^* \boxtimes \cO_{\PP^2}(-4)\longrightarrow I_{q}^4\longrightarrow 0
\]
of $I_{q}^4$ obtained by taking the fourth symmetric power of the Koszul complex of the ideal sheaf $I_{q}$ (cf. Remark \ref{wuniv}).
However, according to Lemma \ref{norbundle} below, the normal space $N_{\bD_5|\bK, q^2}$ can be identified with $\Sym^3W_2$.
Since the diagram \eqref{ksdia} is $\SL(W_2)$-equivariant and both spaces in the bottom row are irreducible representations of $\SL_2$, it suffices to check that the induced map $\overline{\text{KS}}$ is non-zero at a special point. For instance, consider the curve-linear curve $\cI_{\cZ_t}=\langle x^2,xy,y^2+tzx\rangle$ approaching the point $I_{q}^2$. Clearly for all $t\in \CC^*\subset \bK\setminus\bD_5$ and $v=2\partial_x^2\partial_y\in \Sym^3W_2$. The image of $v$ by $\bar{\text{KS}}$ is $\bar{\text{KS}}(v)=\widetilde{\CC[x,y,z]/J}$ for $J=\langle x^4, x^3y, x^2y^2, xy^3, y^4\rangle+\langle zx^2y\rangle$. Since $I_p^4\subset J\subset I_p^2$, we have the inclusions of subschemes in \eqref{netin} by the existence of relative Quot scheme.
\end{proof}

\begin{lemma}\label{norbundle}
The normal bundle of $\bD_5$ in $\bK$ is given by
\[
\cN_{\bD_5\mid\bK}\cong \Sym^3 \cD^*,
\]
where $\cD$ is the universal subbundle of $\bD_5\cong \bD=\Gr(2,W_3^*)$.
\end{lemma}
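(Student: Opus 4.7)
The plan is to compute $\cN_{\bD_5\mid\bK}$ fiberwise in an $\SL_3$-equivariant manner and then globalize. Since $\bD_3\cap \bD_5 = \emptyset$, the birational contraction $\pi:\bH[3]\to \bK$ is an isomorphism near $\bD_5$, so it suffices to compute $\cN_{\bD_5\mid \bH[3]}$. Fix a closed point $p\in \PP^2$ with corresponding $U=(I_p)_1\subset W_3^*$, so that $[p^2]\in \bD_5$ corresponds to $[U]\in \Gr(2,W_3^*)$. From the Euler sequence on $\PP^2=\PP(W_3)$ one has $\mathfrak{m}_p/\mathfrak{m}_p^2\cong U\otimes (W_3^*/U)^{\vee}$. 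Since $\mathfrak{m}_p^2$ is generated as an $\cO_{\PP^2,p}$-module by $\Sym^2 U$ with the two Koszul syzygies $y\cdot x^2-x\cdot xy=0$ and $y\cdot xy-x\cdot y^2=0$, every $\varphi\in\Hom(\mathfrak{m}_p^2,\cO_{\PP^2,p}/\mathfrak{m}_p^2)$ is forced to land in $\mathfrak{m}_p/\mathfrak{m}_p^2$, yielding the invariant identification
\[
T_{p^2}\bH[3] \;\cong\; \Sym^2 U^*\otimes U \otimes (W_3^*/U).
\]

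The differential of $\bD_5\hookrightarrow \bH[3]$ then factors as $\id_{W_3^*/U}\otimes d_0$, where $d_0: U^*\to \Sym^2 U^*\otimes U$ is $\SL(U)$-equivariant and injective (since $\bD_5\hookrightarrow \bH[3]$ is a closed immersion into a smooth scheme). The Clebsch--Gordan decomposition (obtained via $U\cong U^*\otimes \wedge^2 U$) gives
\[
\Sym^2 U^*\otimes U \;\cong\; \Sym^3 U^*\otimes \wedge^2 U \;\oplus\; U^*,
\]
and the two summands are non-isomorphic as $\SL(U)$-representations. Schur's lemma then forces $d_0$ to land in the $U^*$ summand, and injectivity identifies it as an isomorphism onto that summand. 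Hence $\coker(d_0)\cong \Sym^3 U^*\otimes \wedge^2 U$.

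All of the identifications above are natural and globalize over $\bD=\Gr(2,W_3^*)$ with $U, W_3^*/U$ replaced by $\cD,\cQ$, giving
\[
\cN_{\bD_5\mid \bK} \;\cong\; \Sym^3\cD^* \otimes \wedge^2\cD \otimes \cQ.
\]
Taking determinants in $0\to \cD\to W_3^*\otimes \cO_\bD\to \cQ\to 0$ yields $\wedge^2\cD\otimes \cQ\cong \wedge^3 W_3^*\otimes \cO_\bD\cong \cO_\bD$, so the auxiliary factor drops and $\cN_{\bD_5\mid \bK}\cong \Sym^3\cD^*$. The main technical hurdle is the pointwise tangent-space computation for $\bH[3]$ at $p^2$ — specifically the syzygy analysis that forces the constant component of $\varphi$ to vanish — after which the representation-theoretic conclusion follows essentially automatically from Schur's lemma together with the determinantal cancellation $\wedge^3 W_3^*\cong \CC$ for $\SL_3$.
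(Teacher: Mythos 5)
Your proof is correct, and it rests on the same two pillars as the paper's: the identification $\cT_{\bK}|_{\bD_5}\cong\Sym^2\cD^*\otimes\cD\otimes\cQ$ coming from $\Hom(\langle x^2,xy,y^2\rangle,\CC[x,y,z]/\langle x^2,xy,y^2\rangle)$, and the Clebsch--Gordan decomposition $\Sym^2\cD^*\otimes\cD\cong(\Sym^3\cD^*\otimes\wedge^2\cD)\oplus\cD^*$. Where you genuinely diverge is in how the subbundle $\cT_{\bD_5}$ is located inside $\cT_{\bK}|_{\bD_5}$: the paper exhibits the inclusion explicitly as the first map of $\ses{\cD^*}{\Sym^2\cD^*\otimes\cD}{\Sym^3\cD^*\otimes\cQ^*}$ tensored with $\cQ$, given by $\partial_x\mapsto\partial_x^2\otimes y-\partial_x\cdot\partial_y\otimes x$ and so on, and asserts that this commutes with the differential of $j$; you avoid computing that differential altogether by observing that the two summands are non-isomorphic irreducible $\SL(U)$-representations, so that Schur's lemma together with injectivity forces $d_0$ to be an isomorphism onto the multiplicity-one $U^*$-summand and the cokernel is determined a priori. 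This buys a cleaner justification of the one step the paper leaves unverified (that its displayed formula really is $dj$), at the cost of not producing the explicit embedding --- which the paper never uses again, since Proposition \ref{mainprop2} only needs the normal space as an abstract $\SL_2$-representation. You also spell out the syzygy argument behind the tangent-space computation that the paper quotes as well known, and your determinant cancellation $\wedge^2\cD\otimes\cQ\cong\wedge^3W_3^*\otimes\cO_{\bD}\cong\cO_{\bD}$ is exactly the identification the paper buries by writing the quotient as $\Sym^3\cD^*\otimes\cQ^*$ instead of $\Sym^3\cD^*\otimes\wedge^2\cD$. Both arguments globalize the fiberwise statement by homogeneity of $\bD_5$ under $\SL_3$ in the same way.
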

\begin{proof}
Let $[W_2^*]\in \Gr(2,W_3^*)=\bD\cong\bD_5$ denote the point representing the subscheme defined by the ideal $\langle x,y\rangle^2$. Since the space $\bK$ around $\bD_5$ is isomorphic to $\bH[3]$, we can apply the deformation theory of the Hilbert scheme of points. It is well-known that 
\[\begin{split}
T_{[\langle x,y\rangle^2]} \bK&\cong T_{[\langle x,y\rangle^2]}\bH[3]\\
&\cong \Hom_{\CC[x,y,z]}( \langle x^2, xy, y^2\rangle, \CC[x,y,z]/\langle x^2, xy, y^2\rangle).
\end{split}
\]
Since $\bD_5$ is $\SL_3$-transitively invariant, by varying the space $W_2^*$, we get a canonical isomorphism
\[
\cT_{\bK}|_{\bD_5}\cong \cH om_{\bD_5} (\Sym^2\cD,\cD\otimes\cQ)\cong(\Sym^2\cD^*)\otimes (\cD\otimes \cQ).
\]
On the other hand, from the universal sequence $\ses{\cD}{W_3^*\otimes \cO_{\bD_5}}{\cQ}$, we obtain $T_{\bD_5}\cong\cD^*\otimes \cQ$. As combining these ones, we can construct a commutative diagram
\[
\xymatrix{\cT_{\bD_5}\ar@{^{(}->}[r]\ar@{=}[d]&\cT_{\bK}|_{\bD_5}\ar@{->>}[r]\ar@{=}[d]&\cN_{\bD_5\mid\bK}\ar@{=}[d]\\
\cD^*\otimes \cQ\ar@{^{(}->}[r]&(\Sym^2\cD^*\otimes \cD)\otimes\cQ\ar@{->>}[r]&\Sym^3\cD^*.}
\]
Here the below horizontal sequence is the exact sequence $\ses{\cD^*}{\Sym^2\cD^*\otimes \cD}{\Sym^3\cD^*\otimes \cQ^*}$ tensored with $\cQ$, where the first map is given by
\[\begin{split}
\partial_{x}&\mapsto \partial_{x}^2\otimes y- \partial_{x}\cdot \partial_{y} \otimes x,\\
\partial_{y}&\mapsto \partial_{y}^2\otimes x- \partial_{y}\cdot \partial_{x} \otimes y 
\end{split}
\]
such that the dual of $x$ (resp. $y$) is denoted by $\partial_{x}$ (resp. $\partial_{y}$).
\end{proof}
From the inclusion of the statement in Proposition \ref{mainprop2}, we have a commutative diagram:
\[
\xymatrix{\Sym^2\cF\ar@{->>}[rd]\ar[rr]&&\cI_{\overline{\cS}}(4)\subset \cI_{\widetilde{\cZ}}(4)\\
&\cI_{\widetilde{\cS}}(4)\ar@{^{(}->}[ru]&}
\]
over $\widetilde{\bK}\times \PP^2$. By pushing down this diagram along the projection map $p:\widetilde{\bK}\times \PP^2\lr \widetilde{\bK}$, we have the followings.
\begin{theorem}\label{mainthm}
The direct image sheaf $\cV' := p_*\cI_{\overline{\cS}}(4)$ is locally free. Furthermore, $\cV'$ fits into a short exact sequence
\[\ses{\cV'}{p_*\cI_{\widetilde{\cZ}}(4) = \pi^*\cU}{\pi^*\cU/\cV' := \cQ}\]
where the quotient sheaf $\cQ$ is also locally free. This implies the existence of an injective morphism \[\PP(\cV') \subset \PP(\pi^*\cU)\]over $\widetilde{\bK}$ and thus a morphism into $\bR_4$.\end{theorem}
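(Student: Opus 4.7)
The plan is to derive everything from pushing forward a single short exact sequence of ideal sheaves and reading off the consequences of the flatness established in Proposition \ref{mainprop2}. Starting from the nested inclusion $\widetilde{\cZ}\subset \overline{\cS}\subset \widetilde{\bK}\times \PP^{2}$, I will form the short exact sequence
\[
0\longrightarrow \cI_{\overline{\cS}}(4)\longrightarrow \cI_{\widetilde{\cZ}}(4)\longrightarrow (\cI_{\widetilde{\cZ}}/\cI_{\overline{\cS}})(4)\longrightarrow 0
\]
on $\widetilde{\bK}\times \PP^{2}$. Because $\overline{\cS}$ is $\widetilde{\bK}$-flat by Proposition \ref{mainprop2} and $\widetilde{\cZ}=(\pi\times\mathrm{id})^{*}\cZ$ is $\widetilde{\bK}$-flat as the pullback of the universal family, both ideal sheaves are flat; hence so is the quotient on the right, which is supported in relative dimension zero with fibers of constant length.

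Next I will pass to $p_{*}$ using cohomology and base change. For every closed point $t\in\widetilde{\bK}$, the fiber $\overline{\cS}_{t}\subset\PP^{2}$ is zero-dimensional, and the key observation is that over the exceptional divisor $\widetilde{\bD}_{5}$ the inclusion $\overline{\cS}\subset \widetilde{\cS}$ forces $I_{\overline{\cS}_{t}}\supset I_{q^{4}}$, so $\overline{\cS}_{t}$ is a subscheme of the ambient fat point $q^{4}$; its defining ideal is therefore generated in degree $\leq 4$, and quartics on $\PP^{2}$ surject onto its structure sheaf. Consequently $h^{1}(\PP^{2},I_{\overline{\cS}_{t}}(4))=0$ on the exceptional divisor, while the same vanishing holds on the generic locus for $I_{Z}^{2}$ with $Z$ three general fat double points, and trivially for the length-$3$ fiber $\widetilde{\cZ}_{t}$. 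By Grauert's theorem, both $\cV':=p_{*}\cI_{\overline{\cS}}(4)$ and $p_{*}\cI_{\widetilde{\cZ}}(4)$ are locally free with vanishing $R^{1}p_{*}$, and cohomology and base change along $\pi$ identifies $p_{*}\cI_{\widetilde{\cZ}}(4)=\pi^{*}p_{*}\cI_{\cZ}(4)=\pi^{*}\cU$. Pushing the displayed short exact sequence forward by $p$ then yields
\[
0\longrightarrow \cV'\longrightarrow \pi^{*}\cU\longrightarrow \cQ:=p_{*}\bigl((\cI_{\widetilde{\cZ}}/\cI_{\overline{\cS}})(4)\bigr)\longrightarrow 0,
\]
and a final application of base change to the flat zero-dimensional quotient shows that $\cQ$ is locally free of constant rank.

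From here, an inclusion of vector bundles with locally free cokernel automatically gives a closed immersion of projective bundles $\PP(\cV')\hookrightarrow \PP(\pi^{*}\cU)$ over $\widetilde{\bK}$. Composing with the pullback of the flipping map $\bQ_{4}=\PP(\cU)\dashrightarrow \bM_{4}$ of Section \ref{fliplocus}, and using Proposition \ref{mainprop1} (which ensures the composition embeds into $\bR_{4}$ and avoids the flipping locus $\bF^{-}$ over the open $\bK\setminus \bD_{5}$), I will extend the resulting map across $\widetilde{\bD}_{5}$ by properness to a morphism $\PP(\cV')\to \bR_{4}$. The hardest step will be the fiberwise vanishing $h^{1}(\PP^{2},I_{\overline{\cS}_{t}}(4))=0$ at every point of the exceptional divisor: this depends on the specific form of the flat limit produced by the equivariant Kodaira--Spencer analysis of Proposition \ref{mainprop2}, and in particular on the fact that the scheme-theoretic image $\overline{\cS}$ stays sandwiched between $\widetilde{\cZ}$ and $\widetilde{\cS}$, confining the limit subscheme to the fat point $q^{4}$. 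This containment is precisely what justifies blowing up $\bD_{5}$ and taking the scheme-theoretic image, rather than choosing any ad hoc flat extension.
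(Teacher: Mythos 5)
Your proposal follows essentially the same route as the paper's proof: push forward the nested ideal-sheaf sequence $0\to\cI_{\overline{\cS}}(4)\to\cI_{\widetilde{\cZ}}(4)\to\cI_{\widetilde{\cZ}\mid\overline{\cS}}(4)\to0$ coming from Proposition \ref{mainprop2}, and deduce local freeness of $\cV'$ and of the quotient from flatness of $\overline{\cS}$ together with the vanishing of $R^1p_*$ via cohomology and base change. The only difference is that you spell out the fiberwise $h^1$-vanishing (using the containment $\overline{\cS}_t\subset q^4$, for which quartic sections separate) that the paper asserts without argument, so this is added detail rather than a different approach.
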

\begin{proof}
The locally freeness of $\cV'$ is due to the fact that the family $\overline{\cS}$ is flat over $\widetilde{\bK}$ and $R^ip_*\cI_{\overline{\cS}}(4) = 0$ for all $i\geq 1$. Moreover, from the nested ideal sequence $\ses{\cI_{\overline{\cS}}(4)}{\cI_{\widetilde{\cZ}}(4)}{\cI_{\widetilde{\cZ}\mid\bar{\cS}}(4)}$ (\eqref{netin} in Proposition \ref{mainprop2}) and $R^ip_*\cI_{\widetilde{\cZ}}(4) = 0$ for all $i\geq 1$, the quotient sheaf $\cQ$ is locally free.
\end{proof}
To compute the Chow ring of $\PP(\cV')$, we refer to the following proposition.
\begin{proposition}\label{chowset}
There exists an exact sequence
\[
\ses{\cV}{\cV'}{\cV'/\cV=:\cA},
\]
where $\cA$ is isomorphic to 
\[\cA\cong \cO_{\widetilde{\bD}_5}(\widetilde{\bD}_5)\otimes\pi|_{\widetilde{\bD}_5}^*\cO_{\bD_{5}}(-2)\]
for the restriction map $\pi|_{\widetilde{\bD}_5}: \widetilde{\bD}_5\lr \bD_5$ of the blow-up map $\pi$.
\end{proposition}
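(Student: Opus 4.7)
The plan is to construct the inclusion $\cV\subset \cV'$ naturally from the ideal-theoretic data, show that the cokernel is a line bundle supported on $E:=\widetilde{\bD}_5$, and then identify it by restricting the short exact sequence to $E$ and comparing the resulting Tor sequence with the explicit kernel computation of $\bar{s}|_{\bD_5}$ on $\bD_5$.

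For the construction, compose the surjection $\Sym^2\cF\twoheadrightarrow \cI_{\widetilde{\cS}}(4)$ from \eqref{univcom} with the ideal inclusion $\cI_{\widetilde{\cS}}(4)\hookrightarrow \cI_{\overline{\cS}}(4)$ coming from $\overline{\cS}\subset \widetilde{\cS}$ (Proposition \ref{mainprop2}), and then apply $p_*$ to get $\cV\to \cV'$. Both sheaves are locally free of rank $6$ with identical generic fibers $H^0(I_Z^2(4))$, and the map is an isomorphism on $\widetilde{\bK}\setminus E$ (since $\overline{\cS}=\widetilde{\cS}$ there), so the kernel is torsion and vanishes by local freeness of $\cV$. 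Hence $\cA$ is supported on $E$. At a closed point $(q^2, v)\in E$, the image of $\cV_{(q^2,v)}=\Sym^2\langle x^2, xy, y^2\rangle$ inside $\cU_{[q^2]}$ equals $H^0(I_q^4(4))=\Sym^4\langle x, y\rangle$ of dimension $5$, whereas $\cV'_{(q^2,v)}$ has dimension $6$ by the flatness of $\overline{\cS}$ (readily verified from the explicit ideal $J=(x^4, x^3y, x^2y^2, xy^3, y^4, zx^2y)$ appearing in the proof of Proposition \ref{mainprop2}). Therefore $\cA$ has fiber rank $1$ along $E$, so $\cA=i_*\cL$ for a line bundle $\cL$ on $E$.

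To identify $\cL$, I restrict the short exact sequence $\ses{\cV}{\cV'}{i_*\cL}$ to $E$. Using the resolution $\ses{\cO_{\widetilde{\bK}}(-E)}{\cO_{\widetilde{\bK}}}{\cO_E}$ one has $\mathrm{Tor}_1^{\widetilde{\bK}}(i_*\cL, \cO_E)\cong \cL\otimes \cO_E(-E)$, and the associated Tor sequence reads
\[0\to \cL\otimes \cO_E(-E)\to \cV|_E\to \cV'|_E\to \cL\to 0.\]
By Theorem \ref{mainthm}, $\cV'\hookrightarrow \pi^*\cU$ has locally free quotient, so the restriction $\cV'|_E\hookrightarrow (\pi^*\cU)|_E$ remains injective. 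Consequently the kernel of $\cV|_E\to \cV'|_E$ equals the kernel of $\cV|_E\to (\pi^*\cU)|_E$, which is the pullback via $\pi|_E$ of the kernel of $\bar{s}|_{\bD_5}:\cV|_{\bD_5}\to \cU|_{\bD_5}$. Now $\cF|_{\bD_5}\cong \Sym^2\cD$ by the construction of Lemma \ref{deglem}, and the representation-theoretic decomposition of Section \ref{stdsl3} (applied to rank-$2$ $\cD$, so that $\Sym^2(\wedge^2\cD)=(\det\cD)^{\otimes 2}$) gives
\[\Sym^2(\Sym^2\cD)\cong \Sym^4\cD\oplus (\det\cD)^{\otimes 2},\]
with the multiplication map killing the second summand. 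Hence $\ker(\bar{s}|_{\bD_5})\cong(\det\cD)^{\otimes 2}\cong \cO_{\bD_5}(-2)$, so $\cL\otimes \cO_E(-E)\cong \pi|_E^*\cO_{\bD_5}(-2)$, yielding
\[\cL\cong \cO_E(E)\otimes \pi|_E^*\cO_{\bD_5}(-2)=\cO_{\widetilde{\bD}_5}(\widetilde{\bD}_5)\otimes\pi|_{\widetilde{\bD}_5}^*\cO_{\bD_5}(-2).\]

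The main obstacle is verifying that $\cV'|_E\hookrightarrow (\pi^*\cU)|_E$ really is injective after restriction, which is exactly the content of the locally free quotient produced in Theorem \ref{mainthm}; once that is in place, the rest is bookkeeping with the Tor sequence and the $\SL_3$-equivariant representation theory of Section \ref{stdsl3}. The $\SL_3$-equivariant Kodaira-Spencer computation of Proposition \ref{mainprop2} also provides an independent sanity check, as it identifies the "new" rank-$1$ summand appearing on $E$ with a normal-direction class in $\Ext^1_{\PP^2}(\CC_q, I_q^4)$ whose degree along the fibers of $\cN_{\bD_5|\bK}\cong \Sym^3\cD^*$ matches the twist by $\cO_E(E)$ predicted above.
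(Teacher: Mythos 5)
Your argument is correct and follows essentially the same route as the paper's own proof: both construct $\cV\to\cV'$ by pushing forward $\Sym^2\cF\to\cI_{\overline{\cS}}(4)$, observe the cokernel $\cA$ is a line bundle on $\widetilde{\bD}_5$, restrict the short exact sequence to $\widetilde{\bD}_5$, and identify $\mathcal{T}or_1^{\widetilde{\bK}}(\cA,\cO_{\widetilde{\bD}_5})$ with $\pi|_{\widetilde{\bD}_5}^*\Sym^2(\wedge^2\cD)\cong\pi|_{\widetilde{\bD}_5}^*\cO_{\bD_5}(-2)$ via the symmetric-power sequence for $\cF|_{\bD_5}\cong\Sym^2\cD$. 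Your write-up is a bit more careful in two spots --- you justify $\ker(\cV|_{\widetilde{\bD}_5}\to\cV'|_{\widetilde{\bD}_5})=\ker(\cV|_{\widetilde{\bD}_5}\to\pi^*\cU|_{\widetilde{\bD}_5})$ via the locally free quotient of Theorem \ref{mainthm}, and you use the correct twist $\mathcal{T}or_1(i_*\cL,\cO_{\widetilde{\bD}_5})\cong\cL\otimes\cO_{\widetilde{\bD}_5}(-\widetilde{\bD}_5)$ where the paper's diagram shows the opposite sign --- but note that both arguments tacitly assume $\cA$ is scheme-theoretically supported on the \emph{reduced} divisor $\widetilde{\bD}_5$ (i.e.\ that $\det(\cV\to\cV')$ vanishes to order exactly one along it), which your fiber-rank count alone does not force and which really requires a transversal first-order computation such as the family $\langle x^2,xy,y^2+tzx\rangle$ from Proposition \ref{mainprop2}.
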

\begin{proof}
Note that $\text{Supp}(\cA) \subset \widetilde{j}(\widetilde{\bD})=\widetilde{\bD}_5$ for the inclusion map $\widetilde{j}: \widetilde{\bD}\hookrightarrow \widetilde{\bK}$. By pushing down the exact sequence $\Sym^2\cF \to \cI_{\bar{S}}(4) \to \cI_{\bar{S}|\widetilde{S}}(4) \to 0$ along the projection map $p: \widetilde{\bK} \times \PP^2 \to \widetilde{\bK}$, we obtain an exact sequence
\[0 \to \cV \to \cV' \to p_*\cI_{\bar{S}|\widetilde{S}}(4),\]
where the injection comes from the construction of locally free sheaves. On the other hand, by Proposition \ref{mainprop2}, we obtain a commutative diagram
\[
\xymatrix{
\cO_{\widetilde{\bK}}(\widetilde{\bD}_5)|_{\widetilde{\bD}_5}\otimes \cA\cong \mathcal{T}or_{\widetilde{\bK}}^1(\cA, \cO_{\widetilde{\bD}_5})\ar@{^{(}->}[r]\ar@{=}[d] & \cV|_{\widetilde{\bD}_5}\ar[r]\ar@{=}[d] & \cV'|_{\widetilde{\bD}_5}\ar@{->>}[r]& \cA|_{\widetilde{\bD}_5} \\
\cO_{\bD}(-2)\cong \Sym^2(\wedge^2 \cD)\ar@{^{(}->}[r] & \Sym^2(\Sym^2(\cD))\ar@{->>}[r] & \Sym^4\cD,\ar@{^{(}->}[u] &
}
\]
where the bottom row corresponds to the push-forward along $\widetilde{j}_*$ of the standard exact sequence of symmetric powers on $\widetilde{\bD}$. Thus we complete the proof.
\end{proof}
\begin{corollary}\label{marcor}
The bundle $\pi^*\cV$ is the elementary transformation of $\cV'$ by $\cA$ along the divisor $\widetilde{\bD}_5$. Hence there exists a flip between $\PP(\pi^*\cV)$ and $\PP(\cV')$ over $\widetilde{\bK}$ by the general result of Maruyama \cite{Mar73}.
\end{corollary}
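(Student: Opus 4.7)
\emph{Proof proposal.} The plan is to read Proposition \ref{chowset} directly through the lens of Maruyama's theory of elementary transformations, so that the corollary reduces to a verification of hypotheses followed by a citation. First I would recall the short exact sequence
\[
0 \longrightarrow \pi^{*}\cV \longrightarrow \cV' \longrightarrow \cA \longrightarrow 0
\]
on $\widetilde{\bK}$ established in Proposition \ref{chowset}. The content of the first sentence of the corollary is then exactly the definition: since $\cA$ is a coherent sheaf supported on the Cartier divisor $\widetilde{\bD}_5 \hookrightarrow \widetilde{\bK}$ and the quotient map $\cV' \twoheadrightarrow \cA$ exhibits $\pi^{*}\cV$ as the kernel, $\pi^{*}\cV$ is the elementary transformation of $\cV'$ along $\widetilde{\bD}_5$ by the sheaf $\cA$.

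Next I would check the three hypotheses needed to invoke \cite{Mar73}: local freeness of both $\pi^{*}\cV$ and $\cV'$, smoothness of the divisor $\widetilde{\bD}_5$, and the property that $\cA$ is a line bundle on $\widetilde{\bD}_5$. The first is immediate because $\cV = \Sym^2 \cF$ is locally free on $\bK$ and $\cV'$ is locally free by Theorem \ref{mainthm}. For the second, $\bD_5 \cong \Gr(2, W_3^{*})$ is smooth and the blow-up center is smooth, so $\widetilde{\bD}_5 = \PP(\cN_{\bD_5\mid\bK})$ is smooth over $\bD_5$. The third is precisely the explicit identification $\cA \cong \cO_{\widetilde{\bD}_5}(\widetilde{\bD}_5) \otimes \pi|_{\widetilde{\bD}_5}^{*} \cO_{\bD_5}(-2)$ furnished by Proposition \ref{chowset}.

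With the hypotheses in place, I would then apply Maruyama's construction. His result associates to such an elementary transformation a birational small modification between the two projective bundles, which is proper and isomorphic away from the exceptional loci sitting over $\widetilde{\bD}_5$; these exceptional loci are precisely the projective subbundles cut out by the line bundle $\cA$ and its Serre dual on either side. Transcribing this directly with $\cV_1 = \pi^{*}\cV$, $\cV_2 = \cV'$, and $D = \widetilde{\bD}_5$ yields the stated flip between $\PP(\pi^{*}\cV)$ and $\PP(\cV')$ over $\widetilde{\bK}$. I anticipate no substantial obstacle: the entire argument is a formal consequence of Proposition \ref{chowset}, so the only delicate point is bookkeeping with the twist $\cO_{\widetilde{\bD}_5}(\widetilde{\bD}_5) \otimes \pi|_{\widetilde{\bD}_5}^{*}\cO_{\bD_5}(-2)$ when matching the two exceptional projective subbundles, which is routine.
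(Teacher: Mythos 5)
Your proposal is correct and follows exactly the route the paper intends: the corollary is stated without proof as an immediate consequence of the exact sequence $0\to\pi^*\cV\to\cV'\to\cA\to 0$ and the line-bundle identification of $\cA$ from Proposition \ref{chowset}, combined with the citation to Maruyama. Your explicit verification of the hypotheses (local freeness, smoothness of $\widetilde{\bD}_5$, and $\cA$ being a line bundle on it) simply fills in the routine checks the paper leaves implicit.
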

From Theorem \ref{mainthm} and Corollary \ref{marcor}, we have a commutative diagram
\begin{center}
\begin{tikzcd}
\PP(\pi^*\cV) \arrow[rd] \arrow[dd, Rightarrow] \arrow[rr, dashed, "\text{flip}"] &                                                          & \PP(\cV') \arrow[rd] \arrow[dd, Rightarrow] \arrow[ll, dashed]&                                 \\
                                                                   & \PP(\cV) \arrow[rr, dashed, hook] \arrow[dd, Rightarrow] &                                                   & \PP(\cU) \arrow[dd, Rightarrow] \\
\widetilde{\bK} \arrow[rd, "\pi"] \arrow[r, Rightarrow, no head]   & {} \arrow[r, Rightarrow, no head]                         & \widetilde{\bK} \arrow[rd, "\pi"]                 &                                 \\
                                                                   & \bK \arrow[rr, Rightarrow, no head]                       &                                                   & \bK.                            
\end{tikzcd}
\end{center}
\subsection{Description of the contraction map $\PP(\cV')\lr \bR_4$}

In this subsection, we analyze the contraction of the induced map $\PP(\cV')\lr \bR_4\subset \PP(\cU)$.
\begin{proposition}
The birational morphism $\PP(\cV')\lr \bR_4$ contracts the exceptional divisor (i.e., a $\PP^5$-bundle over $\widetilde{\bD}_5$) to sublocus of $\bR_4$, which is a $\PP^8$-fiberation over $\PP^2\cong \bD_5$ ((1) of Proposition \ref{sum2}). 
\end{proposition}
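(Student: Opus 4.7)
The plan is to invoke the morphism $\PP(\cV')\to\bR_4$ from Theorem \ref{mainthm} and reduce the contraction claim to a fiberwise surjectivity check over $\bD_5$. Since this morphism intertwines the structure map $\PP(\cV')\to\widetilde{\bK}$ with $\pi\circ(\text{inclusion}):\bR_4\to\bK$, the exceptional divisor $E$---the preimage of $\widetilde{\bD}_5$ under $\PP(\cV')\to\widetilde{\bK}$---necessarily lands inside $\pi^{-1}(\bD_5)\subset\bR_4$, which is the $\PP^8$-fibration over $\bD_5$ from Proposition \ref{sum2}(1). The two sides have matching dimension ($\dim E = 5+5 = 10 = 2+8 = \dim\pi^{-1}(\bD_5)$), so the real content is to show surjectivity, and by $\SL_3$-transitivity on $\bD_5\cong\PP^2$ it is enough to verify surjectivity over a single point $q\in\bD_5$.

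For the fiberwise analysis, I would use Lemma \ref{norbundle} to identify $\widetilde{\bD}_5|_q\cong\PP(\Sym^3 W_2^*)=\PP^3$, and extend the Kodaira-Spencer calculation from the proof of Proposition \ref{mainprop2} $\SL(W_2^*)$-equivariantly from the one explicit tangent direction to all of $\Sym^3 W_2^*$. This produces an $\SL(W_2^*)$-equivariant isomorphism $v\mapsto\alpha_v$ of $\PP(\Sym^3 W_2^*)$ with itself and identifies the ideal of the flat limit as $I_{\overline{\cS}_{(q^2,v)}} = I_q^4 + \langle z\alpha_v\rangle$ locally at $q$. Passing to degree-$4$ global sections yields the direct-sum decomposition
\[H^0(I_{\overline{\cS}_{(q^2,v)}}(4)) = \Sym^4 W_2^* \oplus \CC\cdot z\alpha_v \;\subset\; \Sym^4 W_2^* \oplus z\cdot\Sym^3 W_2^* = H^0(I_q^3(4)),\]
exhibiting $E|_q$ as the incidence variety $\{(v,[f]) : f\in \Sym^4 W_2^*\oplus \CC\cdot z\alpha_v\}\subset\PP^3\times\PP^8$ with the fiber map being the second projection.

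For a quartic $f = f_0 + zf_1\in H^0(I_q^3(4))$ with $f_1\neq 0$, the unique $v$ with $\alpha_v$ proportional to $f_1$ provides the only preimage, so the map $E|_q\to\pi^{-1}([q^2])=\PP^8$ is generically one-to-one and hence surjective; along the hyperplane $\{f_1=0\}\cong\PP^4$ the auxiliary $\PP^3$-factor parametrizing $v$ is contracted, explaining the word ``contracts'' in the statement. Globalizing these fibers by $\SL_3$-equivariance produces the required surjection $E\twoheadrightarrow\pi^{-1}(\bD_5)$. The main obstacle will be the $\SL(W_2^*)$-equivariant promotion of the Kodaira-Spencer identification in the second step: one must upgrade the single non-vanishing computation in Proposition \ref{mainprop2} to an isomorphism of irreducible $\SL(W_2^*)$-representations, after which the subsequent linear-algebra checks inside the $9$-dimensional space $H^0(I_q^3(4))$ are routine.
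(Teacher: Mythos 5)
Your proposal is correct and follows essentially the same route as the paper: the paper also fixes $[I_q^2]\in\bD_5$, writes the quartics parametrized by the exceptional fiber in the form $f_0+k\,z\alpha_v$ with $f_0\in\Sym^4W_2^*$ and $v$ in the exceptional $\PP^3$, and identifies $E|_{[I_q^2]}$ with the incidence variety $\{\rk\left[\begin{smallmatrix}u_5&u_6&u_7&u_8\\ t_0&t_1&t_2&t_3\end{smallmatrix}\right]=1\}\subset\PP^8\times\PP^3$, i.e.\ the blow-up of $\PP^8=\PP(\rH^0(I_q^3(4)))$ along the $\PP^4=\PP(\Sym^4W_2^*)$, whose projection to $\PP^8$ is exactly your generically one-to-one contraction. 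The only remark worth adding is that the ``main obstacle'' you flag is already disposed of in Proposition \ref{mainprop2}: the equivariant Kodaira--Spencer map there is a nonzero map between the irreducible $\SL(W_2)$-representations $\Sym^3W_2$ and $(\wedge^2W_2^*)^{\otimes 2}\otimes\Sym^3W_2^*$, hence an isomorphism by Schur's lemma, which is precisely the identification $v\mapsto\alpha_v$ you need.
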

\begin{proof}
Let us fix a point $[I_q^2]=[\langle x,y\rangle^2] \in \bD_5$. The exceptional divisor $\widetilde{\bD}_5$ over $[I_q^2]$ has the following description. Let $t_0, t_1, t_2, t_3$ be the local coordinate of the exceptional fiber $\PP(\Sym^3W_2)$ over the point $[I_q^2]\in \bD_5$. Also let $u_0, u_1, u_2, u_3, u_4$ and $k$ be the coordinates of the fiber $\PP^5(\cV'|_{[I_q^2,t_i]})$. Then the quartic forms parameterized by the fiber $\pi|_{\widetilde{\bD}_5}^{-1}([I_q^2])$ are given by the first row in
\begin{equation}\label{canform}
\begin{split}
u_0x^4+u_1x^3y+u_2x^2y^2+u_3 xy^3+u_4y^4&+k(t_0zx^3+t_1zx^2y+t_2zxy^2+t_3zy^3)\\
=u_0x^4+u_1x^3y+u_2x^2y^2+u_3 xy^3+u_4y^4&+u_5zx^3+u_6zx^2y+u_7zxy^2+u_8zy^3.
\end{split}
\end{equation}
On the other hand, let $u_0,u_1,\cdots$, $u_7$ and $ u_8$ be the homogenous coordinates of the projective space $\PP^8$, which parameterizes the quartic forms in the second row of \eqref{canform}. Let $\PP^4\subset \PP^8$ be a linear space defined by $u_5=u_6=u_7=u_8=0$. Then the blow-up of $\PP^8$ along the linear subspace $\PP^4$ is isomorphic to the closure of the graph of the rational map $\PP^8\dashrightarrow \PP^3$, $[u_0:u_1:...:u_8]\mapsto [u_5:u_6:u_7:u_8]$. That is, the closure is the subvariety:
\[
\cC:=\left\{[u_0:u_1:...:u_8]\times [t_0:t_1:t_2:t_3]\;\mid\; \rk\begin{bmatrix}u_5&u_6&u_7&u_8\\ t_0&t_1&t_2&t_3\end{bmatrix}=1 \right\}\subset \PP^8\times\PP^3.
\]
The projection $\cC\lr \PP^3$ into the second component has a projective bundle structure with the fiber $\PP^5$, which is exactly the fiber $\pi|_{\widetilde{\bD}_5}^{-1}([I_q^2])$. This justifies the equality of \eqref{canform} and describes the contraction map $\widetilde{\bD}_5\lr \bR_4$ such that the image is a $\PP^8$-fiberation over $[I_q^2]\in\bD_5$. Note that the fiber of $\cU$ over $[I_q^2]$ is $\PP(\cU_{[I_q^2]})=\PP(\rH^0(I_q^2(4)))$.
\end{proof}

\end{document}